\theoremstyle{plain} 
\newtheorem{theorem}{Theorem}[section]
\newtheorem{corollary}[theorem]{Corollary}
\newtheorem{proposition}[theorem]{Proposition}
\theoremstyle{definition}
\newtheorem{definition}[theorem]{Definition}
\newtheorem{remark}[theorem]{Remark}
\def\r{\mathbb R}
\def\z{\mathbb Z}
\def\n{\mathbb N}
\begin{document}
\title[The number of catenoids connecting two coaxial circles]{The number of catenoids connecting two coaxial circles in Lorentz-Minkowski space}
  \author[S. Akamine]{Shintaro Akamine}
   \address{Graduate School of Mathematics Kyushu University\\
744 Motooka, Nishi-ku,\
Fukuoka 819-0395, Japan}
 \email{s-akamine@math.kyushu-u.ac.jp}
 \author{Rafael L\'opez}
  \address{Departamento de Geometr\'{\i}a y Topolog\'{\i}a\\ Instituto de Matem\'aticas (IEMath-GR)\\
 Universidad de Granada\\
 18071 Granada, Spain}
 \email{rcamino@ugr.es}

\keywords{Lorentz-Minkowski space, catenoid, spacelike surface, timelike surface.}
\subjclass[2010]{53C42, 53B30, 53C50}

\begin{abstract}In $3$-dimensional Lorentz-Minkowski space we determine the number of catenoids connecting two coaxial circles in parallel planes. This study is separated according to the types of circles and the causal character (spacelike and timelike) of the catenoid.
\end{abstract}

\maketitle

\section{Introduction and statement of results}

The catenoid is the only non-planar  rotational minimal surface in Euclidean space and it is generated by the catenary $f(z)=(1/a)\cosh(az+b)$ when rotates around the  $z$ axis. Consider a piece of a catenoid   bounded by   two coaxial circles $C_1\cup C_2$ with the same radius $r>0$ and separated a distance $h>0$ far apart. It is known that if we go separating $C_1$ from $C_2$, there is a critical  distance between $C_1$ and $C_2$ where the catenoid breaks into two circular disks around each circle $C_i$. The relation between $r$ and $h$ is the following: there exists a value $c_1\simeq 1.325$ such that if $h/r<c_1$, there are exactly two catenoids connecting $C_1$ and $C_2$, if $h/r=c_1$, there is exactly one and if $h/r>c_1$, there is no a catenoid spanning $C_1\cup C_2$ (see for example \cite{bl,is,ni}).  Related with the above phenomenon, there is the question to determine if a catenoid is a minimizer of surface area because in general, one of the two catenoids is not  a absolute minimizer.  Exactly, there exists a value $c_2\simeq 1.056$ such that if   if $h/r<c_2$, then  there exists a unique catenoid spanning $C_1\cup C_2$ that is an absolute minimum for the surface area but if $h/r> c_2$, then the two disks spanning $C_i$ give an absolute minimum for surface area (the so-called  Goldschmidt discontinuous solution).
Notice that if $c_2<h/r<c_1$, then the catenoid is only a local minimum.

 In this paper we consider in $3$-dimensional Lorentz-Minkowski space $\r_1^3$ the problem on the number of catenoids connecting two coaxial circles. In this setting, we need to precise the  above notions. First,  it is the definition of a rotational surface. In $\r_1^3$ there are three types of uniparametric groups of rotations depending on the causal character of the rotation axis and are called \emph{elliptic}, \emph{hyperbolic} and \emph{parabolic} when the rotation axis is timelike, spacelike and  lightlike respectively.  In particular, in $\r_1^3$ there are three types of  rotational surfaces. We call a \emph{circle} of $\r_1^3$ the orbit of a point under some of the above groups of rotations when such as an orbit is not a straight line. On the other hand, the notion of the  mean curvature is defined in a surface whose induced metric from $\r_1^3$ is not degenerate, that is, when the surface is spacelike (Riemannian metric) and the surface is timelike (Lorentzian metric).   A \emph{catenoid} is a non-degenerate  rotational surface  with zero mean curvature everywhere.  
 
 The problem that we study is the following:
 \begin{quote} Given two coaxial circles in Lorentz-Minkowski space, how many catenoids span both circles?
 \end{quote}

By two coaxial circles we mean two circles placed in different planes and that are invariant by the same group of rotations.

 For understanding our main result (Theorem \ref{t-main}) we need to point out some observations. If two circles are coaxial, then they  are invariant by one of the three groups of rotations, but not for the other two ones (see Sect. \ref{sec2} for the description of the circles in $\r_1^3$).  On the other hand, the causal character  of the circles imposes restrictions on the (possible) catenoid that span. For example, if the two circles are timelike curves, then the catenoid can not be spacelike. 
 
 We will prove in some cases that the number of catenoids connecting the circles is $0$ or $1$. In this situation we will assume  coaxial circles with arbitrary radius. However, in other cases  there exist many catenoids connecting two coaxial circles and this number increases as the separation distance increases (timelike elliptic catenoids and spacelike hyperbolic catenoids of type II; see Sect. \ref{sec2} below). Then we suppose here that the  circles have the same radius.  

In the following sections, we will state in a precise manner the results obtained according to the  type of the rotation group  and  the causal character  of the surface (spacelike or timelike): see Theorems \ref{t-elliptic}, \ref{t-elliptic2}, \ref{t-hyperbolic}, \ref{t-hyperbolic2} and \ref{t-parabolic}. We can now give a general view of the results in the next theorem and the corresponding Table \ref{table1}.

\begin{theorem}\label{t-main}
 Let $C_1$ and $C_2$ be two coaxial circles in Lorentz-Minkowski space $\r_1^3$.
\begin{enumerate}
\item There exists $0$ or $1$ catenoid connecting $C_1$ and $C_2$ in the following cases: spacelike elliptic catenoid, timelike hyperbolic catenoid of type II and  parabolic catenoid.
\item There exist $0$, $1$ or $2$ timelike hyperbolic catenoids of type I.
\item For timelike elliptic catenoids and spacelike hyperbolic catenoids of type II, and if the circles have the same radius, there exists a number $N(h)\geq 1$ of catenoids connecting $C_1$ and $C_2$ depending on the distance $h$ between the circles, where $N(h)$ is non decreasing on $h$ and $\lim_{h\rightarrow \infty}N(h)=\infty$.
    \end{enumerate}
\end{theorem}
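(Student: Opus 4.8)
The plan is to treat the three rotation types separately and, within each, each admissible causal character of the surface, since Theorem \ref{t-main} is precisely the synthesis of Theorems \ref{t-elliptic}, \ref{t-elliptic2}, \ref{t-hyperbolic}, \ref{t-hyperbolic2} and \ref{t-parabolic}. In every case the first step is to write the catenoid as the orbit of a profile curve $\gamma$ under the corresponding one-parameter group, to reduce the zero mean curvature condition to an ODE for $\gamma$, and to exploit the rotational symmetry to obtain a first integral (a conserved ``flux'' playing the role of $f/\sqrt{1+(f')^2}=\text{const}$ in the Euclidean catenary). This produces an explicit family of generating curves depending on one essential parameter $a>0$, together with a trivial translation along the axis. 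In the spacelike elliptic and timelike hyperbolic type~I cases the generating curve is built from $\cosh$ and $\sinh$, while in the timelike elliptic and spacelike hyperbolic type~II cases the induced non-degenerate metric forces it to be built from $\cos$ and $\sin$, hence periodic; this dichotomy is the source of the three different behaviours in the statement.

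The second step is to reduce ``connecting $C_1$ and $C_2$'' to a scalar counting problem. Placing the two circles at the ends of the profile interval, with prescribed radii $r_1,r_2$ (equal to a common value $r$ in the periodic cases) and prescribed separation $h$, the existence of a catenoid through both circles becomes, after eliminating the axial translation, a single equation $\Phi_{r,h}(a)=0$ for the parameter $a$. The number of catenoids spanning $C_1\cup C_2$ is then exactly the number of solutions of this equation, so everything comes down to the qualitative behaviour of $\Phi_{r,h}$ as a function of $a$.

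The third step is the case-by-case analysis. \emph{(i)} For spacelike elliptic, timelike hyperbolic type~II and parabolic catenoids, one shows that the relevant height (or radius) function is strictly monotone in $a$, so $\Phi_{r,h}$ has at most one zero; inspecting its limiting values as $a\to 0^+$ and $a\to\infty$ then identifies the configurations with no catenoid, yielding the ``$0$ or $1$'' alternative. \emph{(ii)} For timelike hyperbolic type~I catenoids the relevant function has the same convexity profile as the Euclidean catenary, namely a single interior extremum, so a horizontal level is met at $0$, $1$ or $2$ points according as $h$ exceeds, equals, or is below a critical value. \emph{(iii)} For timelike elliptic and spacelike hyperbolic type~II catenoids one uses the periodicity of the generating curve: as $a$ varies, the profile curve performs a number of oscillations that grows without bound, so for fixed $h$ the equation $\Phi_{r,h}(a)=0$ acquires more and more roots as $h$ increases; an intermediate value argument across consecutive ``arches'' of the periodic profile gives one catenoid per admissible arch, whence $N(h)\ge 1$, $N$ non-decreasing in $h$, and $N(h)\to\infty$ as $h\to\infty$.

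The main obstacle I expect is item~\emph{(iii)}: one has to control precisely how many complete arches of the periodic generating curve can be inscribed between two equal-radius circles at distance $h$, and to prove that increasing $h$ never destroys an existing solution, i.e.\ that $N$ is genuinely monotone. This requires a careful study of the period and amplitude of the generating curve as functions of $a$, together with a continuity (degree-type) argument converting the arch count into an exact count of catenoids. The Lorentzian sign bookkeeping in the first integral also needs attention, so that the ``radius'' coordinate of the profile stays positive along the whole arch and the resulting surface is genuinely a catenoid of the asserted causal character.
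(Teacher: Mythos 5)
Your proposal follows essentially the same route as the paper: Theorem \ref{t-main} is proved there exactly as the aggregate of Theorems \ref{t-elliptic}, \ref{t-elliptic2}, \ref{t-hyperbolic}, \ref{t-hyperbolic2} and \ref{t-parabolic}, each obtained by writing down the explicit profile curve from the classification of Theorem \ref{t-rot}, reducing the spanning condition to a scalar root-counting problem in the parameter $a$ (with the phase $b$ handled via the boundary conditions), and then using monotonicity for the $\sinh$/cubic profiles, the Euclidean catenary analysis for the $\cosh$ profile, and periodicity plus intermediate-value arguments for the $\sin$ profiles. You also correctly single out the periodic cases (item (3)) as the delicate part, which is where the paper's analysis is most involved.
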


\begin{table}[htbp]
\centering \renewcommand{\arraystretch}{2}
\begin{tabular}{|l|c|c|c|c|}
\hline & \multicolumn{4}{c|}{Types of rotational surfaces} \\
\cline{2-5}
& &\multicolumn{2}{|c|}{hyperbolic}&\\
\cline{3-4}
     & elliptic & type I  & type II& parabolic \\\hline
    spacelike  & $0,1$	&	  -- &	 $N(h)^*$ & $0,1$\\       \hline
  timelike & $N(h)^*$&	 $0,1,2$ &	 $0,1$ & $0,1$\\
    \hline
\end{tabular}
\caption{The number of catenoids connecting two coaxial circles according to the type of rotation group and the type of causality of the surface. In (*), the radius of the circles coincide}
\label{table1}
\end{table}

 \section{Catenoids in Lorentz-Minkowski space}\label{sec2}

Consider the Lorentz-Minkowski space $\r_1^3=(\r^3,\langle,\rangle=dx^2+dy^2-dz^2)$
where $(x, y, z)$ are the canonical coordinates in $\mathbb{R}^3$. A vector $v\in\r_1^3$ is said to be spacelike (resp. timelike, lightlike) if   $\langle v, v \rangle > 0$ or $v=0$ (resp. $\langle v, v \rangle < 0$,  $\langle v, v \rangle = 0$ and $v\neq 0$). In $\r_1^3$ there are three types of uniparametric groups of isometries that leave pointwise fixed a straight line $L$. In order to give a description of such groups, we do a change of coordinates and we suppose that $L$ is given in terms of the canonical basis  of $\mathbb{R}^3$, namely,  $B=\{e_1, e_2, e_3\}=\{(1,0,0),(0,1,0),(0,0,1)\}$. Let  $\{A(t):t\in\r\}$ be the uniparametric group of isometries whose rotation axis is $L$, where $A(t)$ denotes the isometry as well as the matricial expression with respect to $B$. Then we have the next classification according   the causal character of $L$:

\begin{enumerate}
\item The axis is timelike, $L=\mbox{sp}\{e_3\}$. Then
$$ A(t)=\left(\begin{array}{ccc}
\cos t&-\sin t&0\\ \sin t&\cos t&0\\
0& 0&1\end{array}\right).$$

\item The axis is spacelike, $L=\mbox{sp}\{e_1\}$. Then
$$ A(t)=\left(\begin{array}{ccc}1&0&0\\
0&\cosh t&\sinh t\\
0&\sinh t&\cosh t\end{array}\right).$$
\item The axis is lightlike, $L=\mbox{sp}\{e_1+e_3\}$. Then
$$ A(t)=\left(\begin{array}{lll}1-\frac{t^2}{2}&t&\frac{t^2}{2}\\ -t&1&t\\-\frac{t^2}{2}&t&1+\frac{t^2}{2}\end{array}\right).$$
\end{enumerate}

A \emph{circle} in $\r_1^3$ is the orbit of a point under one of the above groups  when the orbit is not a straight line. In particular, this implies that  the point does  belong to the rotation axis. With respect to the above choices of rotation axes $L$, a circle describes an  Euclidean circle (resp. hyperbola,  parabola) if $L$ is timelike (resp. spacelike, lightlike).  Exactly, and depending on the  causal character  of $L$, we have:
\begin{enumerate}
\item Timelike axis.  Each circle meets the $xz$-plane. Let $(a,0,c)$ be a point in this plane that does not belong to $L$  ($a\not=0$). The orbit is the circle $\alpha(t)=(0,0,c)+r(\cos(t),\sin(t),0)$ where $r=|a|>0$  is called the {\it radius} of $\alpha$.
    \item Spacelike axis.  Each circle meets the $xy$-plane  or the $xz$-plane. Let  $(a,c,0)$ (resp. $(a,0,c)$) be a point that does not belong to $L$, that is, $c\not=0$.  The orbit is the circle $\alpha(t)=(a,0,0)+r(0,\cosh(t),\sinh(t))$ (resp.  $\alpha(t)=(a,0,0)+r(0,\sinh(t),\cosh(t))$) where $r=|c|>0$ is called the {\it radius} of $\alpha$.
        \item Lightlike axis.  Each circle meets the $xz$-plane. Consider a point $(a,0,c)$ that does not belong to the axis ($a-c\not=0$). Then the circle is a parabola in a parallel plane to the plane of equation $x-z=0$ and parametrized by $\alpha(t)= (a,0,c)+t(0,1,0)+ \frac{t^2}{2(c-a)}(1,0,1)$. Here we do not define the center and the radius of the circle. The circle $\alpha$ is a spacelike curve.

        \end{enumerate}

Once obtained the three groups of rotations of $\r_1^3$, we give the description of a local parametrization $X(s,t)$ of a rotational surface. Using  the terminology given in the introduction, we obtain the next classification:
\begin{proposition} \label{p-revolution}
Up to an isometry of $\r_1^3$, a local parametrization of a  rotational surface is given as follows: if $f\in C^{\infty}(I)$, $I\subset\r$ and $s,t\in\r$, then:
\begin{enumerate}
\item Elliptic rotational surface. The parametrization is
$$X(s,t)=A(t)\cdot(f(s),0,s)=(f(s)\cos t,f(s)\sin t,s).$$
 The circles are Euclidean circles   contained in parallel   planes to the $xy$-plane.
\item Hyperbolic rotational surface. We have two subcases:
\begin{enumerate}
\item Type I. The parametrization is
$$X(s,t)=A(t)\cdot(s,f(s),0)=(s,f(s)\cosh t,f(s) \sinh t)$$
 and the circles are timelike hyperbolas contained in parallel planes to the $yz$-plane.
    \item Type II.  The parametrization is
    $$X(s,t)=A(t)\cdot(s,0,f(s))=(s,f(s)\sinh t,f(s)\cosh t)$$
     and the circles are spacelike hyperbolas contained in parallel  planes to the $yz$-plane.

\end{enumerate}
\item Parabolic rotational surface. The parametrization is
$$X(s,t)=A(t)\cdot(f(s)+s,0,f(s)-s)$$ and the circles are parabolas contained in planes parallel to the plane of equation $x-z=0$. The curve of  vertices of these parabolas lies included in the plane of equation $y=0$ and it is a graph on the line $\mbox{sp}\{(1,0,1)\}$, namely, $s\longmapsto s(1,0,-1)+f(s)(1,0,1)$.
\end{enumerate}
\end{proposition}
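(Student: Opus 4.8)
The plan is to bring each case to a normal form in two stages: first normalize the rotation axis, then write the generating curve as a graph over the axis direction.

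\emph{Normalizing the axis.} The isometry group of $\r_1^3$ acts transitively on the set of straight lines of any fixed causal type. Indeed, after a translation sending a point of the axis $L$ to the origin, a suitable Lorentz transformation carries $L$ to $\mbox{sp}\{e_3\}$, $\mbox{sp}\{e_1\}$ or $\mbox{sp}\{e_1+e_3\}$ according to whether $L$ is timelike, spacelike or lightlike (for timelike and spacelike this is transitivity of $O(1,2)$ on the hyperboloids $\langle v,v\rangle=\mp1$ after rescaling $v$; for lightlike, transitivity on the null cone minus the origin). Since the uniparametric rotation group with a given axis is determined by that axis, this isometry conjugates it into the standard group $\{A(t)\}$ of Section~\ref{sec2} and carries the rotational surface $S$ to a surface invariant under $\{A(t)\}$. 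So we may assume $A(t)$ is one of the three explicit matrices.

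\emph{Choosing a transverse slice.} For each group I pick a half-plane or plane $P$ met exactly once by every orbit not contained in $L$, and set $\gamma=S\cap P$; then $S=\bigcup_{t\in\r}A(t)(\gamma)$, since $\bigcup_t A(t)(P)$ is the union of all such orbits. For the elliptic group take $P=\{(x,0,z):x>0\}$, as each orbit off the axis is a Euclidean circle in a plane $z=\mathrm{const}$ centered on $L$, meeting $P$ once. For the hyperbolic group the orbits inside each plane $x=\mathrm{const}$ are level sets of $y^2-z^2$; near a point where $y^2-z^2>0$ the surface lies in that region, which gives type~I with $P=\{(x,y,0):y>0\}$, and near a point where $y^2-z^2<0$ it gives type~II with $P=\{(x,0,z):z>0\}$, each orbit being one branch of a hyperbola meeting $P$ once. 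For the parabolic group every orbit off the axis is a parabola meeting $P=\{y=0\}$ once. All of these are exactly the orbit computations already recorded in Section~\ref{sec2}.

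\emph{Graph parametrization and conclusion.} On a neighbourhood where $\gamma$ is regular and is a graph over the direction of $L$ --- the situation relevant for a local parametrization --- I write $\gamma$ as $s\mapsto(f(s),0,s)$, $s\mapsto(s,f(s),0)$ or $s\mapsto(s,0,f(s))$ in the elliptic and the two hyperbolic cases; in the parabolic case, expressing the planar curve $\gamma\subset\{y=0\}$ in the null frame $\{(1,0,-1),(1,0,1)\}$ and writing its $(1,0,1)$-component as $f$ of its $(1,0,-1)$-component gives $\gamma(s)=s(1,0,-1)+f(s)(1,0,1)=(f(s)+s,0,f(s)-s)$. Applying the corresponding matrix $A(t)$ to $\gamma(s)$ is then a direct multiplication yielding the four displayed expressions for $X(s,t)$. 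The description of the orbits $t\mapsto X(s_0,t)$ --- Euclidean circles, timelike hyperbolas, spacelike hyperbolas and parabolas, together with the planes containing them --- follows from the same orbit computations; for instance $y^2-z^2$ is invariant under the hyperbolic $A(t)$ and $x-z$ is invariant under the parabolic $A(t)$, which pins down the planes parallel to $x=\mathrm{const}$ and to $\{x-z=0\}$ respectively, and the curve of vertices of the parabolas. I expect the only genuinely delicate point to be the reductions in the first two stages --- in particular that, for the spacelike axis, a rotational surface stays locally on one side of the null cone, which is what forces the split into types~I and~II --- whereas the final stage is routine matrix arithmetic.
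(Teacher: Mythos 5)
Your proposal is correct, and it matches the paper's (implicit) approach: the paper states Proposition \ref{p-revolution} without a separate proof, treating it as an immediate consequence of the explicit matrices $A(t)$ and the orbit descriptions given just before it in Section \ref{sec2}, and your argument --- normalize the axis by transitivity of the isometry group on lines of each causal type, cut by a transversal met once by every orbit, write the profile curve locally as a graph over the axis direction (in the null frame for the parabolic case), then apply $A(t)$ --- is exactly the standard justification those computations are meant to supply. You also correctly isolate the one nontrivial point, namely that for a spacelike axis the sign of the invariant $y^2-z^2$ near the chosen point forces the split into types I and II.
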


We now recall the notion of the mean curvature for a non-degenerate surface in $\r_1^3$.  See the references \cite{on} and \cite{we} for details.  If $X: M \rightarrow \r_1^3$ is an immersion of a  smooth surface $M$, we say that $X$ is non-degenerate if the induced metric on $M$ is non-degenerated. There are only two possibilities of non-degenerate surfaces: the metric is Riemannian and  we say that $X$ is \emph{spacelike}, or   the metric is Lorentzian and we say that $X$ is \emph{timelike}. In terms of the coefficients of the first fundamental form of $X$, namely,  $E=\langle X_s,X_s\rangle$, $F=\langle X_s,X_t\rangle$ and $G=\langle X_t,X_t\rangle$, the surface is spacelike if $EG-F^2>0$ and it is timelike if  $EG-F^2<0$. The mean curvature $H$ is defined as the trace of the second fundamental form. If  $X=X(s,t)$ is  a local parametrization,  the zero mean curvature equation $H=0$  writes as
$$E\ \mbox{det}(X_s,X_t,X_{tt})-2F\ \mbox{det}(X_s,X_t,X_{st})+G\ \mbox{det}(X_s,X_t,X_{ss})=0.$$

\begin{definition}
A  \emph{catenoid} in $\r_1^3$ is a  non-degenerate rotational surface  with zero mean curvature everywhere.
\end{definition}

We notice that a transformation of a catenoid by a homothety of $\r_1^3$ gives other catenoid invariant by  the same group of rotations and with the same causal character. A straightforward computation leads to all catenoids in $\r_1^3$, obtaining the next classification (see \cite{ko,lo}).

\begin{theorem} \label{t-rot} Up to an isometry of $\r_1^3$ and assuming that the  rotational surface is parametrized  according Proposition \ref{p-revolution}, a catenoid in $\r_1^3$ is generated by one of the following profile curves (see Table \ref{table2}):
\begin{enumerate}
\item Elliptic catenoid. Then $f(s)=(1/a)\sinh(as+b)$ (spacelike surface) or $f(s)=(1/a)\sin( as+b)$ (timelike surface), $a\not=0$, $b\in\r$.
\item Hyperbolic catenoid.
\begin{enumerate}
\item Type I. Then  $f(s)=(1/a)\cosh(as+b)$ (timelike surface), $a\not=0$, $b\in\r$. There are not spacelike surfaces.
\item Type II. Then $f(s)=(1/a)\sin( as+b)$ (spacelike surface) and  $f(s)=(1/a)\sinh( as+b)$ (timelike surface), $a\not=0$, $b\in\r$.

\end{enumerate}
\item Parabolic catenoid. Then $f(s)=as^3+b$ (spacelike surface) and $f(s)=-as^3+b$ (timelike surface), where $a>0$, $b\in\r$.
\end{enumerate}
\end{theorem}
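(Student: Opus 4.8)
The plan is to go through the four families of rotational surfaces listed in Proposition~\ref{p-revolution} one at a time, substitute the corresponding parametrization $X(s,t)$ into the zero mean curvature equation
\[
E\,\det(X_s,X_t,X_{tt})-2F\,\det(X_s,X_t,X_{st})+G\,\det(X_s,X_t,X_{ss})=0,
\]
and solve the resulting ordinary differential equation for the profile function $f$. The first point, valid in all four cases, is that a direct computation of the first fundamental form yields $F=\langle X_s,X_t\rangle=0$, because for these generating curves the profile direction is orthogonal (in the Lorentzian sense) to the rotation direction; this kills the middle term. The same computation expresses $E$ and $G$, hence $EG-F^2$, purely in terms of $f$ and $f'$: one finds $E=f'^2-1$, $G=f^2$ in the elliptic case, $E=1+f'^2$, $G=-f^2$ in hyperbolic type~I, $E=1-f'^2$, $G=f^2$ in hyperbolic type~II, and $E=4f'$, $G=4s^2$ in the parabolic case. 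These expressions are what will eventually decide the causal character of each solution.

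Because each $A(t)$ is an isometry and $X$ is equivariant, the mean curvature $H=H(s,t)$ depends on $s$ only, so it is enough to impose $H=0$ along the curve $\{t=0\}$. Evaluating the three determinants there is an elementary $3\times 3$ calculation, and it reduces the equation to $ff''=f'^2-1$ in the elliptic and hyperbolic type~II cases, to $ff''=1+f'^2$ in hyperbolic type~I, and (after discarding the axis points $s=0$ and dividing by $16s^2$) to $sf''=2f'$ in the parabolic case. Each of these is integrable: for the first two, regarding $p=f'$ as a function of $f$ gives $p^2\mp 1=Kf^2$ for an integration constant $K$, and a second quadrature produces $f=(1/a)\sinh(as+b)$ or $f=(1/a)\sin(as+b)$ from $p^2-1=Kf^2$ according as $K>0$ or $K<0$, and $f=(1/a)\cosh(as+b)$ from $p^2+1=Kf^2$; the parabolic equation integrates directly to $f=as^3+b$. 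Substituting each family back into the formula for $EG-F^2$ obtained in the first step sorts the solutions into spacelike and timelike and reproduces Table~\ref{table2}; the borderline value $K=0$, i.e. $f'$ constant (and the analogous constant-$f'$ case for the parabola), is exactly the locus where $EG-F^2\equiv 0$ and is therefore excluded by non-degeneracy. The residual freedom in the constants $a,b$ is accounted for by a homothety of $\r_1^3$ and by a translation along the rotation axis, as noted before the statement.

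The calculations are routine, and the only spots needing a little care are: first, the parabolic case, where the rotation matrix is nilpotent rather than (pseudo-)orthogonal and the profile curve has the asymmetric shape $s\mapsto(f(s)+s,0,f(s)-s)$, so one must keep track of the factors of $s$ pulled out of $X_t$ and $X_{tt}$ before evaluating the determinants; and second, the final bookkeeping that pairs each regime of the integration constant with the correct causal character and checks that the degenerate borderline coincides precisely with the surfaces one wants to discard. Neither is a real obstruction, which is why the result can honestly be described as a straightforward computation.
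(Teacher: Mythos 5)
Your computation is correct and is exactly the ``straightforward computation'' that the paper itself omits, deferring instead to the references \cite{ko,lo}: same reduction of $H=0$ to an ODE for $f$ along $t=0$ using $F=0$ and equivariance, same quadratures, and same sorting by the sign of $EG-F^2$. I verified the first fundamental forms, the reduced equations $ff''=f'^2-1$, $ff''=1+f'^2$ and $sf''=2f'$, and the causal-character bookkeeping; all check out.
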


\begin{table}[htbp]
\centering \renewcommand{\arraystretch}{2}
\begin{tabular}{|l|c|c|c|c|}
\hline & \multicolumn{4}{c|}{Types of rotational surfaces} \\
\cline{2-5}
& &\multicolumn{2}{|c|}{hyperbolic}&\\
\cline{3-4}
     & elliptic & type I  & type II& parabolic \\\hline
    spacelike  & $\dfrac{1}{a}\sinh( as+b)$	&	 $-$&	 $\dfrac{1}{a}\sin( as+b)$ & $as^3+b$\\       \hline
  timelike & $\dfrac{1}{a}\sin( as+b)$	&	 $ \dfrac{1}{a}\cosh(as+b)$ &	 $\dfrac{1}{a}\sinh( as+b)$& $-as^3+b$\\
    \hline
\end{tabular}
\caption{Profile curves of the catenoids in $\r_1^3$}\label{table2}
\end{table}


\section{Elliptic catenoids spanning two coaxial circles}\label{s-elliptic}

We consider two coaxial circles $C_1\cup C_2$ with respect to the axis $L=\mbox{sp}\{e_3\}$. The analysis of how many catenoids connect $C_1$ with $C_2$ distinguishes  two cases according to the causal character of the surface.

\subsection{Spacelike surfaces}

After a translation and a homothety, we suppose that $C_1$ is the circle of radius $1$ in the $xy$-plane. Let $(1,0,0)$ and  $(x_0,0,z_0)$ be the intersection points between $C_1$ and $C_2$ with the $xz$-plane, respectively. Because the profile curve of the spacelike elliptic catenoid is given in terms of the $\sinh$ function, and $(1,0, 0)$ belongs to the surface, then the profile curve is
$$x=\frac{1}{a}\sinh (\pm az+\sinh^{-1}(a)).$$
 After a change of coordinates, the problem is formulated in terms of planar curves in the $xy$-plane as follows: given a point $P=(x_0,y_0)$, among the  curves in the family
${\mathcal F}=\{(1/a)\sinh(\pm ax+\sinh^{-1}(a)):a>0\}$ passing through the point $Q=(0,1)$, how many of such curves  does the point $P$ contain? Here $x_0\not=0$ (to be distinct of $Q$) and $y_0\not=0$ because $P$ does not belong to the rotation axis, namely, the $x$-axis.

Define the region $R=R_1\cup R_2\subset\r^2$ given by
$$R_1=\{(x,y)\in\r^2:x>0,y>x+1\},\ \ R_2=\{(x,y)\in\r^2:x<0,y<x+1\}$$
and let $T$ be  the symmetry of $R$  with respect to $y$-axis. Then all the curves   of ${\mathcal F}$ are contained in the region   $S=R\cup T\cup\{Q\}$. On the other hand, by the symmetry of the problem and the graphics of the elements of ${\mathcal F}$, the point $P$ must belong to $S\cup \Phi(S)$, where $\Phi$ is the symmetry of the $xy$-plane with respect to the $x$-axis. As a conclusion, the point $P$ can not belong to $\r^2-(S\cup\Phi(S))$, proving that   there is not a catenoid connecting $C_1$ and $C_2$. This gives a part of the statement of Theorem \ref{t-main}.

Suppose now that $P\in R\cup T$.  Since the circle generated by the point $(-x_0,y_0)$ is the same one than $P$, it is sufficient to consider the case of $P\in R$ and we are looking for an element of ${\mathcal F}$ of type $(1/a)\sinh(az+b)$ with $a>0$. Under this assumption on the point $P$, we will prove that there is exactly one curve of ${\mathcal F}$ going through $P$.

\textbf{Subcase $P\in R_1$}. Then there exists a curve of  ${\mathcal F}$ passing $P$ if there exists   $a\in\r$ that is a solution of the equation
$$\frac{1}{a}\sinh(ax_0+\sinh^{-1}(a))=y_0,$$
where $x_0>0$ and $y_0>1+x_0$, or equivalently,
\begin{equation}\label{se1}
\cosh(a x_0)+\frac{\sqrt{1+a^2}}{a}\sinh(a x_0)=y_0.
\end{equation}
 Define
\begin{equation}\label{gg}
g(a)=\cosh(a x_0)+\frac{\sqrt{1+a^2}}{a}\sinh(a x_0).
\end{equation}
As $\lim_{a\rightarrow 0}g(a)=1+x_0$ and $\lim_{a\rightarrow \infty}g(a)=+\infty$, we conclude by continuity that  there exists $a>0$ such that
$g(a)=y_0$. This proves that at least there is an element of ${\mathcal F}$ connecting both points.

To prove that there is exactly one, we see that the function $g$ is strictly increasing on $a$. The derivative of $g$  is
\begin{equation}\label{el1}
g'(a)=\frac{1}{a^2\sqrt{1+a^2}}\left(a(1+a^2) x_0\cosh(ax_0)+(a^2 x_0 \sqrt{1+a^2}-1)\sinh(ax_0)\right).
\end{equation}
We now show that the expression inside the brackets is positive for all $a,x\in(0,\infty)$. Define
\begin{equation}\label{hhh}
h(x)=a(1+a^2) x\cosh(ax)+(a^2 x\sqrt{1+a^2}-1)\sinh(ax).
\end{equation}
Then $h(0)=0$ and
\begin{equation}\label{el2}
h'(x)=a^2(1+x\sqrt{1+a^2})\left(a\cosh(ax)+\sqrt{1+a^2}\sinh(ax)\right).
\end{equation}
As $h'(x)>0$ for $a,x>0$, then $h$ is strictly increasing on $x$, so $h(x)>h(0)=0$, proving that \eqref{el1} is positive.

\textbf{Subcase $P\in R_2$}. We prove the existence of a value $a\in\r$ that is a solution of  \eqref{se1} for $x_0<0$ and $y_0<x_0+1$. With the same function $g$ defined in \eqref{gg}, we have  $\lim_{a\rightarrow 0}g(a)=1+x_0$ and $\lim_{a\rightarrow \infty}g(a)=-\infty$ and this shows the existence of a solution $a$ of  \eqref{se1}, so there is a curve in the family ${\mathcal F}$ passing through the point $P$. Proving the uniqueness of this catenoid is equivalent to see that  $g$ is strictly decreasing. For this, we  show that $g'(a)<0$, or equivalently, that $h(x)<0$ for $a,-x\in (0,\infty)$, where $h$ is defined in \eqref{hhh}.  A simple study of the function $h(x)$ proves that $h(0)=0$, $h<0$ in a neighborhood of $(-\epsilon,0)$ of $x=0$  and  when $x<0$, the function $h$ has exactly a local maximum $x_M$ and a local minimum $x_m$, where $x_M$ and $x_m$ are determined by
$$\tanh(ax_M)=-\frac{a}{\sqrt{1+a^2}},\ \ x_m=-\frac{1}{\sqrt{1+a^2}}.$$
The value $h$ at $x_M$ is
$$h(x_M)=a\left( x_M+\frac{1}{\sqrt{1+a^2}}\right)\cosh(ax_M)<0,$$
because $x_M<x_m=-1/\sqrt{1+a^2}$. This proves that $h(x)<0$ for $x\in (-\infty,0)$.

As conclusion, we have proved:
\begin{theorem}\label{t-elliptic}
 Let $C_1\cup C_2$ be two Euclidean coaxial circles in $\r_1^3$ with respect to the $z$-axis. Then the number of spacelike elliptic catenoids spanning $C_1\cup C_2$ is $0$ or $1$.
\end{theorem}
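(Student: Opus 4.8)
The plan is to turn the problem into an existence-and-uniqueness question for a one-parameter family of planar curves, and to resolve uniqueness by a monotonicity argument. \textbf{Normalization and reduction.} First I would apply a translation along the $z$-axis and a homothety of $\r_1^3$ (both of which preserve the class of elliptic catenoids and the causal character) to arrange that $C_1$ is the unit circle in the $xy$-plane, recording $C_2$ by one of its intersection points with the $xz$-plane. By Theorem \ref{t-rot}, a spacelike elliptic catenoid has profile curve $x=(1/a)\sinh(\pm az+b)$, and demanding that the profile pass through the radius-$1$ level $(1,0,0)$ forces $b=\sinh^{-1}(a)$. Relabelling coordinates so that these profiles become graphs over the $x$-axis, the question becomes: among the curves of $\mathcal{F}=\{(1/a)\sinh(\pm ax+\sinh^{-1}(a)):a>0\}$, all passing through $Q=(0,1)$, how many contain a prescribed point $P=(x_0,y_0)$, where $x_0\neq 0$ and $y_0\neq 0$ because $C_1,C_2$ lie in different planes and $P$ is off the axis?

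\textbf{Locating $P$ and the case of no catenoid.} For the $+$ branch I would set $D(x)=(1/a)\sinh(ax+\sinh^{-1}(a))-(x+1)$ and note that $D(0)=0$ while $D'(x)=\cosh(ax+\sinh^{-1}(a))-1\geq 0$, so $D$ is non-decreasing; hence $y>x+1$ for $x>0$ and $y<x+1$ for $x<0$, i.e. this branch lies in $R_1\cup R_2\cup\{Q\}=R\cup\{Q\}$. The $-$ branch is the reflection of the $+$ branch across the $y$-axis, so it lies in $T\cup\{Q\}$, and therefore every curve of $\mathcal{F}$ is contained in $S=R\cup T\cup\{Q\}$. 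Since the two intersection points of $C_2$ with the $xz$-plane, which correspond to $P$ and $\Phi(P)$, determine the same circle, a spanning catenoid can exist only if $P\in S\cup\Phi(S)$; otherwise the count is $0$, which is one half of the claim. It then suffices to treat $P\in R$, and, replacing the $+$ branch by the $-$ branch if necessary, to treat $P\in R_1$ or $P\in R_2$ with the $+ax$ branch.

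\textbf{Existence and uniqueness.} A catenoid through $P$ corresponds to a root $a>0$ of $g(a)=y_0$, where $g(a)=\cosh(ax_0)+\frac{\sqrt{1+a^2}}{a}\sinh(ax_0)$. I would compute $\lim_{a\to 0^+}g(a)=1+x_0$, together with $\lim_{a\to\infty}g(a)=+\infty$ when $x_0>0$ (so $P\in R_1$, $y_0>x_0+1$) and $\lim_{a\to\infty}g(a)=-\infty$ when $x_0<0$ (so $P\in R_2$, $y_0<x_0+1$); in either case the intermediate value theorem produces a root, hence at least one catenoid. Uniqueness amounts to showing $g$ is strictly monotone on $(0,\infty)$; since $g'(a)$ has the sign of $h(x_0)$ with $h(x)=a(1+a^2)x\cosh(ax)+(a^2x\sqrt{1+a^2}-1)\sinh(ax)$, it is enough to prove $h>0$ on $(0,\infty)$ and $h<0$ on $(-\infty,0)$. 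For $x>0$ this follows from $h(0)=0$ and $h'(x)=a^2(1+x\sqrt{1+a^2})(a\cosh(ax)+\sqrt{1+a^2}\sinh(ax))>0$. For $x<0$ I would analyse the critical points of $h$ directly: on $(-\infty,0)$ the only ones are a local maximum at $x_M$ with $\tanh(ax_M)=-a/\sqrt{1+a^2}$ and a local minimum at $x_m=-1/\sqrt{1+a^2}$, and since $x_M<x_m$ one gets $h(x_M)=a\bigl(x_M+1/\sqrt{1+a^2}\bigr)\cosh(ax_M)<0$, hence $h<0$ throughout $(-\infty,0)$. Either way $g$ is strictly monotone, the root is unique, there is exactly one catenoid, and combined with the previous step this gives that the number of spanning catenoids is $0$ or $1$.

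\textbf{Expected main obstacle.} The normalization and the existence half are routine. The real difficulty is the uniqueness when $P\in R_2$, that is $x_0<0$: there the auxiliary function $h$ is no longer monotone, so one must locate its critical points explicitly and check the sign of $h$ at the local maximum, the inequality $x_M<x_m$ being precisely what closes the argument. A secondary point needing care is the bookkeeping of the two reflections (across the $y$-axis and the $x$-axis) in the second step, so that the condition $P\notin S\cup\Phi(S)$ genuinely captures every configuration admitting no spanning catenoid.
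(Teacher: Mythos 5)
Your proposal is correct and follows essentially the same route as the paper: the same reduction to the planar family $\mathcal{F}$ through $Q=(0,1)$, the same region $S\cup\Phi(S)$ for the nonexistence half, and the same monotonicity argument for $g$ via the sign of $h$, including the critical-point analysis at $x_M$ and $x_m$ in the case $x_0<0$. The only difference is your explicit verification, via the auxiliary function $D$, that the curves of $\mathcal{F}$ lie in $S$, a containment the paper asserts without proof.
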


\begin{remark}  A zero mean curvature spacelike surface is called a maximal surface because it maximizes the area. In fact, by  the expression of the  Jacobi operator associated to the second variation of the surface area, it is straightforward to check that the surface is stable  in a strong sense, that is, the first eigenvalue of the Jacobi operator
on any compact domain is positive \cite{bo}.  For this reason, and in contrast to the Euclidean setting described in the introduction, it was expected that  there would  be a unique catenoid at most connecting two coaxial circles.
\end{remark}

We analyse  the case of two coaxial circles with the same radius. Although this case is covered in Theorem \ref{t-elliptic},   in this particular case  we find a relation between the radius and the separation between the circles.

\begin{corollary}\label{c1} Let $C_{\pm h} =\{(x,y,z)\in\r_1^3: x^2+y^2=r^2, z= \pm  h\}$ be two circles of radius $r>0$ with respect to the $z$-axis and separated a distance $2h>0$ far apart. Then the number of spacelike elliptic catenoids connecting the circles $C_{-h}\cup C_h$ is $0$ if $r\leq h$ and it is $1$ if $r>h$.
\end{corollary}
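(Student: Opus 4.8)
The plan is to specialize the analysis in the proof of Theorem \ref{t-elliptic} to the symmetric pair $C_{-h}\cup C_h$, exploiting that this configuration is invariant under the reflection $(x,y,z)\mapsto(x,y,-z)$ of $\r_1^3$. Since a homothety of $\r_1^3$ sends catenoids to catenoids of the same type and causal character (and does not change the ratio $r/h$), I would first rescale by $1/r$, so that the two circles have radius $1$ and lie in the planes $z=\pm k$ with $k=h/r$; the dichotomy $r\leq h$ versus $r>h$ then becomes $k\geq1$ versus $k<1$. By Theorem \ref{t-rot}, a spacelike elliptic catenoid having these two circles as its boundary is generated by a profile $f(s)=\frac1a\sinh(as+b)$, with $a\neq0$ and $b\in\r$, parametrized over $s\in[-k,k]$ and satisfying $|f(-k)|=|f(k)|=1$; replacing $(a,b)$ by $(-a,-b)$ leaves $f$ unchanged, so I may assume $a>0$.

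The heart of the argument is to show that any such profile must be odd, i.e.\ $b=0$. Since $f'(s)=\cosh(as+b)>0$, the function $f$ is strictly increasing on $[-k,k]$. If $f$ kept a constant sign there, then $|f|$ would be strictly monotone and could not take the common value $1$ at the two distinct endpoints $\pm k$; and $f$ cannot vanish at an endpoint, since the corresponding boundary circle has radius $1>0$. Hence $f$ has a single zero $s_0\in(-k,k)$ and can be written $f(s)=\frac1a\sinh\!\big(a(s-s_0)\big)$; then $|f(-k)|=|f(k)|$ reads $\sinh\!\big(a(k+s_0)\big)=\sinh\!\big(a(k-s_0)\big)$ with both arguments positive, and injectivity of $\sinh$ forces $s_0=0$. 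Thus $f(s)=\frac1a\sinh(as)$ and the surface passes through the rotation axis at the origin; it is the degenerate catenoid obtained by gluing two spacelike elliptic catenoids along the axis, which is exactly the type of solution already encoded by the region $\Phi(S)$ in the proof of Theorem \ref{t-elliptic}.

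It then remains to count the values $a>0$ for which the single remaining condition $f(k)=1$, that is $\sinh(ak)/a=1$, holds. Writing $\psi(a)=\sinh(ak)/a=k\cdot\frac{\sinh(ak)}{ak}$ and using that $u\mapsto\sinh(u)/u$ is strictly increasing on $(0,\infty)$, one sees that $\psi$ is strictly increasing with $\lim_{a\to0^{+}}\psi(a)=k$ and $\lim_{a\to\infty}\psi(a)=+\infty$. Hence $\psi(a)=1$ has exactly one solution when $k<1$ and no solution when $k\geq1$, which after undoing the rescaling is precisely the assertion: one catenoid if $r>h$, none if $r\leq h$. I expect the delicate point to be the parity step of the second paragraph: it is the strict monotonicity of $f$ that prevents a connecting catenoid from staying off the axis, so the surface is pushed through the midplane $z=0$, and this is what collapses the whole question onto the single inequality between $r$ and $h$. (Alternatively, one could bypass the parity discussion by locating the point $P=(2h/r,1)$ in the planar formulation of the proof of Theorem \ref{t-elliptic}: it lies outside $S\cup\Phi(S)$ exactly when $2h/r\geq2$ and inside otherwise, giving the same count.)
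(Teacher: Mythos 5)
Your proof is correct and follows essentially the same route as the paper: both reduce the count to the single equation $\sinh(ah)/a=r$ (your $\psi(a)=1$ after rescaling by $1/r$) and conclude from the strict monotonicity of $u\mapsto\sinh(u)/u$ together with the limits $h$ at $a\to0$ and $\infty$ at $a\to\infty$. The only difference is that you explicitly justify, via the strict monotonicity of the profile, why the generating curve must be the odd one $f(s)=\frac{1}{a}\sinh(as)$ passing through the axis --- a step the paper's proof takes for granted --- which is a welcome completion rather than a change of method.
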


\begin{proof}
We formulate the problem for planar curves    in the $xy$-plane. Let us fix the point $P=(h,r)$ and study how many   curves of type $f(x)=(1/a)\sinh(ax)$ go through the point $P$. This is equivalent to study the number of solutions of equation
\begin{equation}\label{ee1}
\frac{1}{a}\sinh(ah)=r,
\end{equation}
where $r$ is a given number and the unknown is $a$. Define the function $g(a)= \sinh(ah)/a$. We have $\lim_{a\rightarrow 0}g(a)=h$ and $\lim_{a\rightarrow\infty}g(a)=\infty$. Since
$$g'(a)=\frac{ah\cosh(ah)-\sinh(ah)}{a^2},$$
and the numerator is always positive, the function $g$ is strictly increasing, proving that there is a unique value $a$ reaching $g(a)=r$ only for $r>h$.

\end{proof}

\subsection{Timelike surfaces}

We consider two coaxial circles and we ask how many timelike elliptic catenoids connect both circles. In this subsection we focus in the case that the circles have the same radius which we suppose that, after a homothety,  $r=1$.  Up to a translation in the direction of the rotation axis, we also suppose that the circles are $C_{-h}\cup C_h$, where $C_{h}=\{(x,y, h)\in\r_1^3:x^2+y^2=1\}$. We know that the profile curve of a timelike elliptic catenoid is   $x=f(z)=(1/a)\sin (\pm az+b)$, where $a>0$ and $b\in \mathbb{R}$.   By the symmetry with respect to the axis of rotation,  the formulation  of the problem for planar curves is as follows: given the points $P=(-h,\pm 1)$ and $Q=(h,1)$, how many curves in the family ${\mathcal F}=\{(1/a)\sin (\pm ax+b): a,b\in\r, a>0\}$  connect both points. For  such a curve, the boundary conditions imply
$$\sin(\pm ah+b)=a,\ \ \sin (\mp ah+b)=\pm a.$$
\begin{enumerate}
\item Case $\sin (-ah+b)= a$. We obtain $b=(2k+1)\pi/2 $, $k\in \z$, or $ah=k\pi$, $k\in\n$.
\begin{enumerate}
\item Subcase $b=(2k+1)\pi/2 $. Then the curve is  $y(x)=(-1)^k\cos(ax)/a$. Independently of the value $(-1)^k$, $y(x)$ describes the same rotational surface, we suppose $k=0$, so   $y(x)=\cos(ax)/a$. Thus we ask on the number of values $a$ that are solutions of
$$\cos(ah)-a=0\ \ (a>0)$$
depending on the distance $2h>0$ between the circles  $C_{-h}$ and $C_h$. Since
$$\lim_{a\rightarrow 0}(\cos(ah)-a)=1,\ \ \lim_{a\rightarrow \infty}(\cos(ah)-a)=-\infty,$$
 we deduce  that there is at least  one solution. We now study the number of solutions depending on the value of $h$. The derivative of the function $g_h(a)=\cos(ah)-a$ is $g_h'(a)=-h\sin(ah)-1$. If $h\leq 1$, $g_h$ is decreasing with respect to $a$, so there is a unique curve of ${\mathcal F}$  connecting the points $P$ and $Q$. We study the zeroes of $g_h$.  First, we observe the next periodicity property on $g_h$:
\begin{equation}\label{pe}
g_h(a+\frac{2k\pi}{h})=g_h(a)-\frac{2k\pi}{h}\ \ (k\in \n).
\end{equation}
 We proved that if $h\leq 1$, the function $g_h$ is decreasing and there is a unique zero of $g_h$. After  $h>1$, the function $g_h$ has an infinite number of local minimum and maximum. By \eqref{pe}, and because the first local minimum (after the first zero of $g_h$) is negative, the rest of local minimum are negative. However, we will see that for $h$ sufficiently big, there is a finite number of local maximum where the value of $g_h$ is  positive, getting zeroes of $g_h$ between a local minimum and one of these local maximum by the Bolzano theorem. See Fig. \ref{ftime}. Let $m_0>0$ be the first minimum of $g_h$. Then we have $\sin(m_0h)=-1/h$. We point out that $m_0=m_0(h)$ decreases as $h$ increases, that is, $\lim_{h\rightarrow \infty}m_0(h)=0$.  We know that $\cos(m_0h)<0$. As a consequence of \eqref{pe}, the set of local minimum $\{m_k:k\in\n\}$ and the set of local maximum points $\{M_k:k\in\n\}$ of $g_h$ are given by the relations
 $$m_k=m_0+\frac{2k}{h}\pi,\ \ M_k=-m_0+\frac{2k+1}{h}\pi.$$
 Using that $\sin(m_0 h)=-1/h$, for $h>1$ we have  that
 \begin{equation}\label{fhm}
 g_h(M_k)=-\cos(m_0 h)+m_0-\frac{2k+1}{h}\pi=\frac{\sqrt{h^2-1}}{h}+m_0-\frac{2k+1}{h}\pi.
 \end{equation}

 \begin{figure}[hbtp]
\begin{center}
\hspace{0.5cm}\includegraphics[width=.3\textwidth]{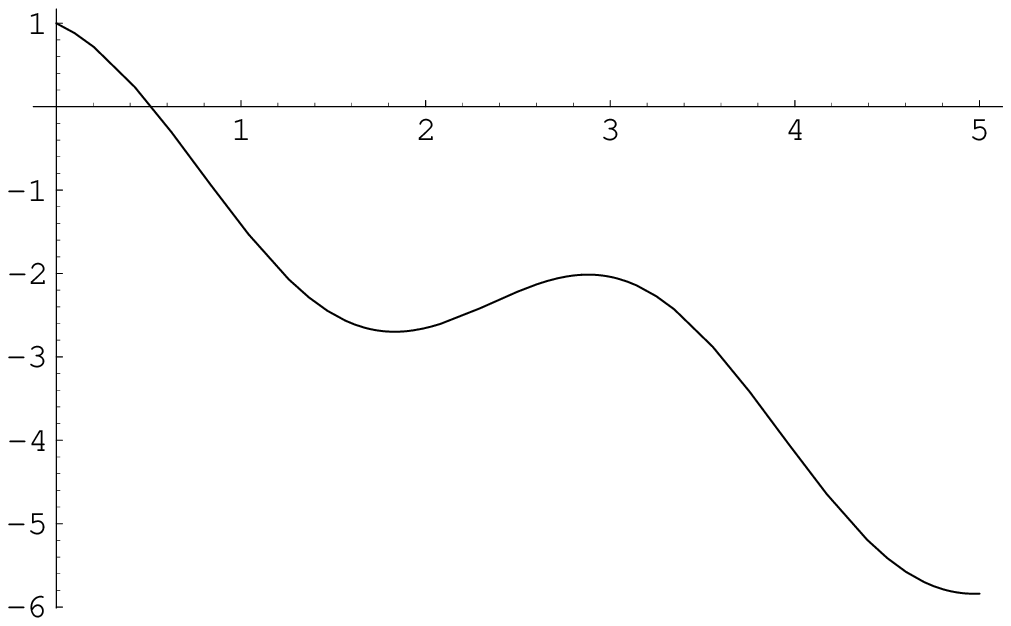}
\hspace{0.4cm}\includegraphics[width=.3\textwidth]{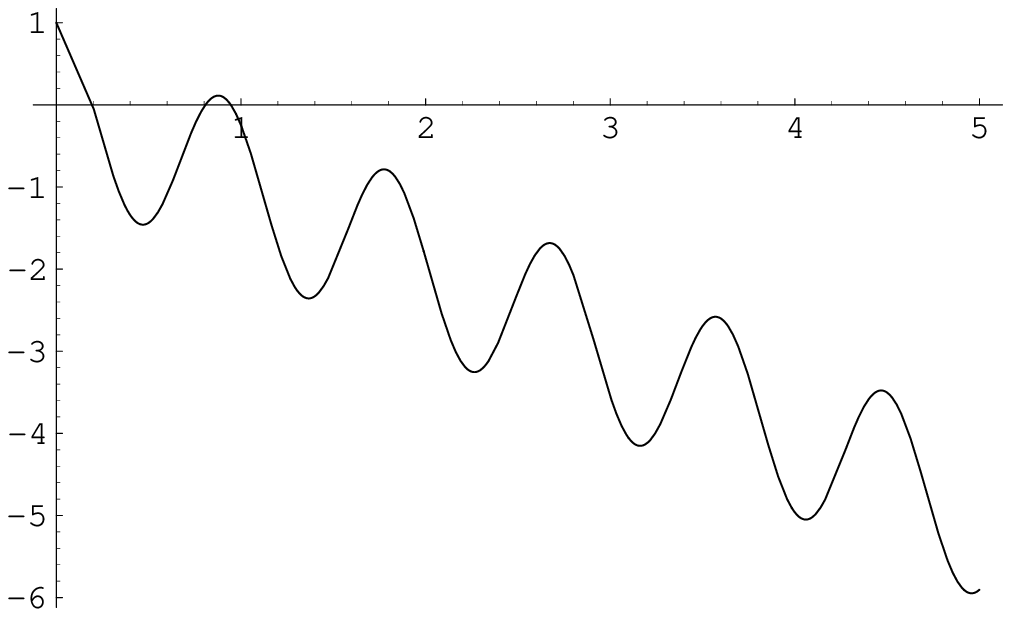}\hspace{0.4cm}\includegraphics[width=.3\textwidth]{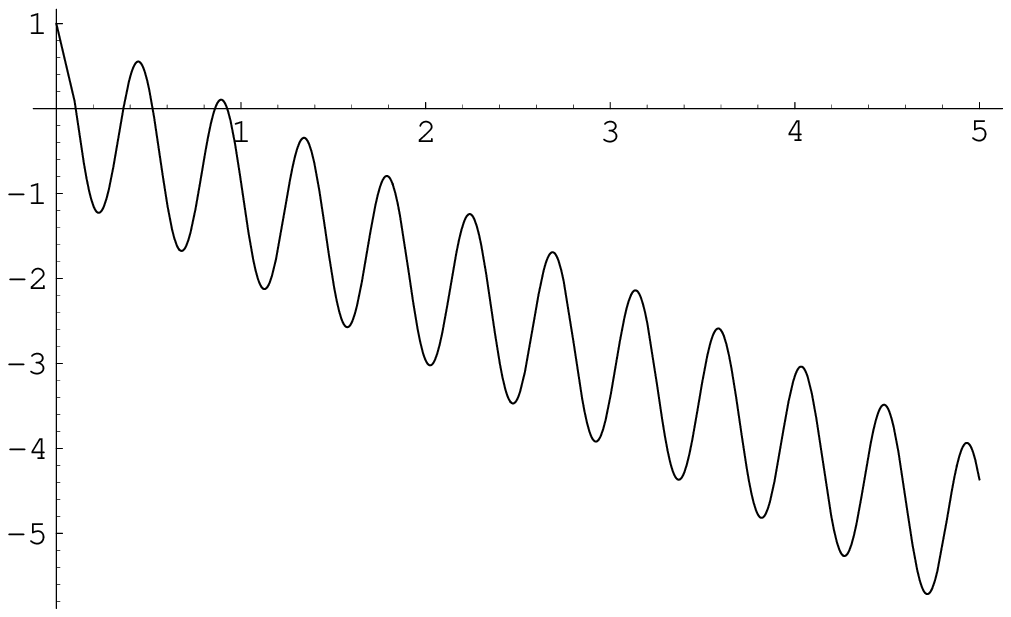}\end{center}
\caption{Graphics of $g_h$ for different  values of $h$: from left to right, $h=2,7$ and $14$}\label{ftime}
\end{figure}

Thus, for $h>1$ sufficiently big, $g_h(M_k)>0$ and Eq. \eqref{fhm} implies that there exists $k_0\geq 1$ such that
$g_h(M_k(h))>0$ for $1\leq k\leq k_0$ and for $k>k_0$, the value $g_h(M_k)$ is negative by \eqref{pe}.
A numerical computation gives the first value $h$ where there are exactly two zeroes of $g_h$, namely,  $h\simeq 6.202$ and $g_h(M_1(h))=0$.

\item Subcase $ah=k\pi$, $k\in\n$. Then
$$y(x)=\frac{h}{k\pi}\sin(\pm\frac{k\pi}{h}x+b).$$
Because $y(h)=1$, then
\begin{equation}\label{sinb}
\sin(b)=(-1)^k\frac{k\pi}{h}.
\end{equation}
It is clear that if $h<\pi$, there   exists no a solution of \eqref{sinb}. If $h\geq \pi$, we have to solve $\sin(b)=(-1)^kk\pi/h$. For $h\geq k\pi$, there exists many $b$'s solving  $\sin(b)=(-1)^kk\pi/h$. We observe that the first value where there is at least one solution is $h=\pi$ and the number of solutions increases as $h$ increases.
\end{enumerate}

\item Case $\sin(\mp ah+b)=-a$. Then $b=k\pi$, $k\in\z$ or $ah=k\pi+\pi/2$, $k\in\n\cup\{0\}$.
\begin{enumerate}
\item Subcase $b=k\pi$, then $$y(x)=\frac{1}{a}\sin (\pm ax+k\pi)=\frac{(-1)^{k }}{a}\sin(\pm ax).$$
We solve $\sin(ah)-a=0$ where the unknown is $a$. Define the function $G_h(a)=\sin(ah)-a$, which satisfies
$G_h(0)=0$. If $h\leq 1$, then $G_h'(a)=h\cos(ah)-1\leq 0$ and $G_h$ is decreasing and this proves that there is not a solution. If $h>1$, then $G_h$ is increasing in an interval close to $a=0$, so $G_h$ is positive in this interval. Since $\lim_{a\rightarrow\infty}G_h(a)=-\infty$, then there is a solution, proving that there is at least a catenoid.

The behavior of the function $G_h$ is similar to $g_h$ because it holds the relation
$$G_h(a+\frac{\pi}{2h})=g_h(a)-\frac{\pi}{2h}.$$
When $h>1$,   $G_h$ has an infinite number of local minimum and maximum. The first critical point corresponds with a local maximum at $M_0^*$ with $G_h(M_0^*)>0$. We study when the function $G_h$ at the second local maximum $M_1^*$ is positive and this occurs when $h\simeq 7.790$,  so $G_h(M_1(h)^*)=0$, obtaining two catenoids.
The conclusions are the same as in the first subcase previously studied
\item Subcase $ah=k\pi+\pi/2$, $k\in\n\cup\{0\}$. Then $$y(x)=\frac{2h}{(2k+1)\pi}\sin\left(\pm \frac{(2k+1)\pi}{2h}x+b\right).$$
The condition $y(h)=1$ gives
$$\cos(b)=\pm (-1)^{k}\frac{(2k+1)\pi}{2h}.$$
If $h<\pi/2$, there is not a solution and for $h\geq \pi/2$, there is at least one solution. The number of solutions increases as $h$ increases.\end{enumerate}
\end{enumerate}

We summarize the above results.

\begin{theorem}\label{t-elliptic2} Let $C_{-h}$ and $C_{h}$ be two coaxial Euclidean circles of radius $r>0$ with respect to the $z$-axis and separated a distance equal to $2h$. If $N(h)$ denotes  the number of timelike elliptic catenoids connecting $C_{-h}$ and $C_h$, then: 
\begin{enumerate}
\item $N(h)$ is a finite number.
\item  $N(h)$ is a non-decreasing function  on $h$.
\item The limit of $N(h)$ is $\infty$ as $h\rightarrow\infty$.
\item  There exists $c_0>0$ such that if  $h/r<c_0$, then $N(h)=1$.
\end{enumerate}
\end{theorem}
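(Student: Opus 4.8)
The plan is to package the case analysis carried out above into the four assertions. Throughout, I would work in the planar reformulation: a timelike elliptic catenoid through $C_{-h}\cup C_h$ corresponds to a curve of the family $\mathcal{F}=\{(1/a)\sin(\pm ax+b):a>0,b\in\r\}$ through $P=(-h,\pm 1)$ and $Q=(h,1)$, and these split into the four subcases (1)(a), (1)(b), (2)(a), (2)(b) above, governed respectively by the zero counts of $g_h(a)=\cos(ah)-a$, the solution counts of $\sin b=(-1)^k k\pi/h$, the zero counts of $G_h(a)=\sin(ah)-a$, and the solution counts of $\cos b=\pm(-1)^k(2k+1)\pi/(2h)$. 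Since everything is scale-invariant under homotheties of $\r_1^3$, I may as well fix $r=1$, so that $c_0$ in (4) is just the threshold value of $h$; the general statement $h/r<c_0$ follows by rescaling.

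For (1), finiteness: in the $b=(2k+1)\pi/2$ subcase the zeroes of $g_h$ lie among the local maxima $M_k=-m_0+(2k+1)\pi/h$, and by the periodicity relation \eqref{pe} together with \eqref{fhm} one has $g_h(M_k)=\sqrt{h^2-1}/h+m_0-(2k+1)\pi/h\to-\infty$ as $k\to\infty$, so only finitely many $M_k$ can have $g_h(M_k)\ge 0$; between consecutive sign changes of $g_h$ at most one zero occurs because $g_h$ is monotone on each interval between a local max and the next local min. The same argument with $G_h$ handles the $b=k\pi$ subcase. In the remaining two subcases one solves $\sin b=\text{const}$ or $\cos b=\text{const}$ on, say, a fundamental period of $b$ (different $b$'s differing by $2\pi$, or by the relevant symmetry, giving the same rotational surface), so only finitely many solutions arise. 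Summing the four finite counts gives $N(h)<\infty$.

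For (2) and (3), monotonicity and the limit: the cleanest route is to show each of the four constituent counts is individually non-decreasing in $h$ and that at least one of them tends to $\infty$. For the $\mathcal{F}$-curves of the form $(1/a)\sin(\pm(k\pi/h)x+b)$ (subcase (1)(b)), a solution exists once $h\ge k\pi$ and persists for all larger $h$, and the admissible range of $k$ is $1\le k\le h/\pi$, which grows without bound; likewise subcase (2)(b) contributes roughly $h/\pi$ solutions. This already forces $N(h)\to\infty$. For genuine monotonicity of the total, the subtlety is that the $g_h$-count and $G_h$-count are, a priori, only shown above to jump up at $h\simeq 6.202$ and $h\simeq 7.790$; I would argue that $g_h(M_k)$, viewed as a function of $h$ for fixed $k$, is non-decreasing (differentiate \eqref{fhm} in $h$, using $\sin(m_0h)=-1/h$ and $m_0'(h)<0$), so once a zero of $g_h$ appears near $M_k$ it never disappears; the interlacing of minima (all negative, by \eqref{pe}) and maxima then shows the number of zeroes equals the number of indices $k$ with $g_h(M_k)\ge 0$, a non-decreasing quantity. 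The same for $G_h$. Hence each of the four counts is non-decreasing and $N(h)$ is too.

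For (4): for $h\le 1$ one checks directly that three of the four subcases contribute nothing — subcases (1)(b) and (2)(b) need $h\ge\pi$ resp. $h\ge\pi/2$, and in subcase (2)(a) the function $G_h$ has $G_h'(a)=h\cos(ah)-1\le 0$, so $G_h$ is strictly decreasing from $G_h(0)=0$ and has no positive zero — while subcase (1)(a) with $h\le 1$ has $g_h'(a)=-h\sin(ah)-1<0$... wait, that sign needs $h\le 1$ indeed gives $g_h$ strictly decreasing with exactly one zero (since $g_h(0)=1>0$, $g_h\to-\infty$). So $N(h)=1$ for $h\le 1$, and one may take $c_0=1$ (equivalently the statement holds with $c_0$ at least $1$; the exact optimal $c_0$ is a separate numerical question and need not be pinned down). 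The main obstacle I anticipate is (2): establishing true monotonicity of the $g_h$- and $G_h$-zero counts, rather than merely exhibiting the first jump numerically, requires the careful sign analysis of $\partial_h g_h(M_k(h))$ sketched above, and one must be attentive that a zero is not lost when a local maximum value decreases through $0$ on the "wrong" side — here the fact that all local minima stay negative (a consequence of \eqref{pe}) is what rules that out.
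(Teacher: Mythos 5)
Your proposal is correct and follows essentially the same route as the paper, whose proof of this theorem is precisely the four-subcase analysis of $g_h$, $G_h$ and the equations for $b$ that you are packaging. If anything, you are more careful than the paper on item (2): your sketch of why $g_h(M_k)$ is increasing in $h$ (so that zeroes, once created, persist) supplies a monotonicity verification that the paper only asserts via the periodicity relation and numerical thresholds.
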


By the above proof, we have the next information about the number of catenoids connecting$C_{-h}$ and $C_h$:
\begin{enumerate}
\item[Case (1)]
\begin{enumerate}
\item  If $h/r <6.202$, then there is not a catenoid.
\item    If  $h/r<\pi$, then there is not a catenoid.
\end{enumerate}
\item[Case (2)]
\begin{enumerate}
\item If $h/r\leq 1$, then the number of catenoids is $0$,  and if $1<h/r<7.790$, then it is $1$ .
\item If  $h/r<\pi /2$, then there is no a catenoid.
\end{enumerate}
\end{enumerate}
Using the inequality $1<\pi /2<\pi<6.202<7.790$, we obtain that if $h/r>1$, the number of catenoids is more than 1 and we can take at least two catenoids which correspond to the cases (1,a) and (2,a).

\begin{remark} We see that the behavior of the number of timelike elliptic catenoids connecting two coaxial circles with the same radius is the opposite to the Euclidean case: as we increase the separation between the circles, the number of  catenoids connecting them increases. See Fig. \ref{tec}.
\end{remark}

\begin{figure}[hbtp]
\begin{center}
\includegraphics[width=.21\textwidth]{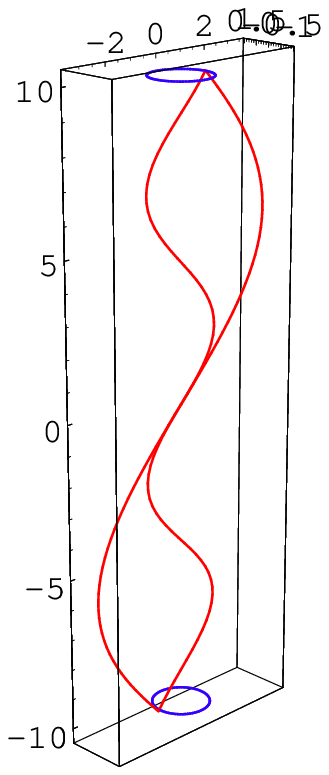}\hspace*{.2\textwidth} \includegraphics[width=.27\textwidth]{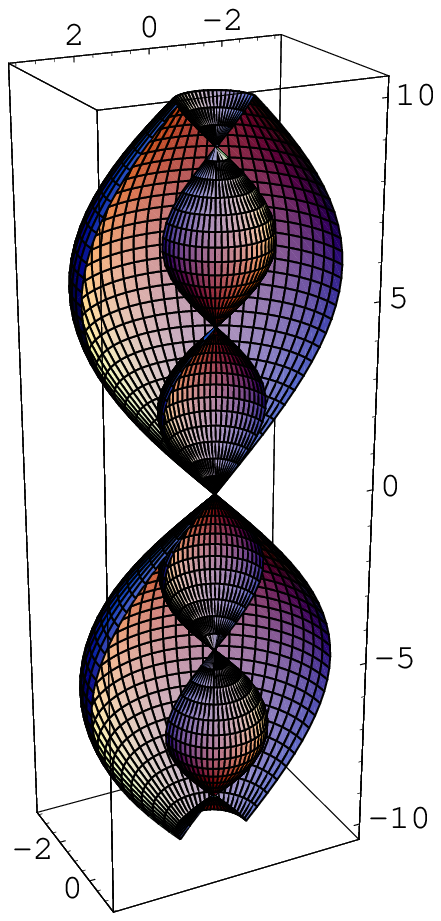}\end{center}
\caption{Two timelike elliptic catenoids connecting two coaxial circles. Left: two coaxial circles of radius $r=1$ and $h=20$ far apart (blue) connected by two profile curves $f(s)=\sin(as)/a$ for values $a\simeq 0.285$ and $a\simeq 0.706$ (red). Right: the corresponding two timelike elliptic catenoids}\label{tec}
\end{figure}
\section{Hyperbolic catenoids  spanning two coaxial circles}

In this section, we consider two coaxial hyperbola $C_1\cup C_2$ and we ask how many of hyperbolic catenoids connect $C_1$ with $C_2$. Using the same terminology of Proposition \ref{p-revolution}, we say that $C_1\cup C_2$ are two coaxial hyperbolas of type I (resp. type II) if there exists a hyperbolic rotational surface of type I (resp. type II) connecting $C_1$ with $C_2$. In particular, $C_1$ and $C_2$ are timelike hyperbolas (resp. spacelike hyperbolas). The profile curves are given in Theorem \ref{t-rot}. Exactly, the profile curve of the hyperbolic catenoid of type I is $y(x)=(1/a)\cosh(ax+b)$, $a\not=0$, $b\in\r$, which coincides with the catenary in the Euclidean setting and whose behavior has been described in the introduction for coaxial circles with the same radius. When the surface is of type II, the profile curves have appeared in the elliptic case. Thus we have:

\begin{theorem}\label{t-hyperbolic}   Let $C_1\cup C_2$ be two coaxial timelike hyperbolas in $\r_1^3$ with respect to the $x$-axis. Then the number of timelike hyperbolic catenoids of type I spanning $C_1\cup C_2$ is $0$, $1$ or $2$.
\end{theorem}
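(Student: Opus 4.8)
The plan is to reduce the problem, exactly as in the elliptic case, to a question about planar curves in the $xy$-plane. After a homothety and a translation along the $x$-axis (the rotation axis), I would normalize the first hyperbola $C_1$ so that its intersection point with the $xy$-plane is $Q=(0,1)$, and let $P=(x_0,y_0)$ with $x_0\neq 0$, $y_0\neq 0$ be the intersection point of $C_2$ with the $xy$-plane. Since the profile curve of a timelike hyperbolic catenoid of type I is $y(x)=(1/a)\cosh(ax+b)$ with $a\neq 0$, $b\in\r$, and the surface generated is unchanged under $a\mapsto -a$, $b\mapsto -b$, I may restrict to $a>0$. Imposing $y(0)=1$ forces $\cosh(b)=a$, which requires $a\geq 1$ and gives $b=\pm\operatorname{arccosh}(a)$; here, unlike the $\sinh$ case, there are genuinely two branches of $b$ for each $a>1$. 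So the counting problem becomes: for how many pairs $(a,\pm)$ with $a\geq 1$ does $(1/a)\cosh(\pm\operatorname{arccosh}(a)+ax_0)=y_0$ hold?

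The key step is to analyze, for each choice of sign, the function
\[
g_\pm(a)=\frac{1}{a}\cosh\bigl(ax_0\pm\operatorname{arccosh}(a)\bigr),\qquad a\ge 1.
\]
Using $\cosh(\operatorname{arccosh}(a))=a$ and $\sinh(\operatorname{arccosh}(a))=\sqrt{a^2-1}$, the addition formula gives
\[
g_\pm(a)=\cosh(ax_0)\pm\frac{\sqrt{a^2-1}}{a}\sinh(ax_0),
\]
which is the natural hyperbolic analogue of \eqref{gg}. I would then study the boundary and limiting values: $g_\pm(1)=\cosh(x_0)$ for both signs, and as $a\to\infty$ one has $g_\pm(a)\to+\infty$ when $x_0$ has the appropriate sign, while for the other sign the dominant-exponential comparison shows $g_+$ and $g_-$ behave differently (one can blow up, the other can tend to a finite limit or to $0$, depending on $\operatorname{sgn}(x_0)$). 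A monotonicity computation — differentiating $g_\pm$ and reducing, as in \eqref{el1}--\eqref{el2}, to the sign of an auxiliary function $h_\pm(x_0)$ with $h_\pm(0)=0$ — should show that each $g_\pm$ is eventually monotone, with at most one interior critical point. This yields at most one solution $a$ on each branch, hence at most two solutions in total; and a case analysis on the position of $P$ relative to the relevant forbidden regions (the analogues of $S\cup\Phi(S)$) pins down when the count is $0$, $1$ or $2$.

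The main obstacle I anticipate is the sign analysis of the derivative of $g_\pm$ on the branch where the function is \emph{not} manifestly monotone: the presence of the $\operatorname{arccosh}$ term (equivalently the $\sqrt{a^2-1}/a$ coefficient, which unlike $\sqrt{1+a^2}/a$ is \emph{increasing} on $(1,\infty)$ and vanishes at $a=1$) changes the structure of the auxiliary function $h_\pm$, so the clean "$h>0$ for all $x_0>0$" argument of the elliptic case will not transfer verbatim. I expect one branch to require a finer study — locating its unique local maximum, as in the $P\in R_2$ subcase with the explicit $x_M$, $x_m$, and checking the sign of $g_\pm$ there — to rule out a second crossing. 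Once that is done, combining the at-most-one-per-branch bound with the existence arguments (intermediate value theorem between $g_\pm(1)=\cosh(x_0)$ and the limit at $\infty$) gives the full statement: $0$, $1$ or $2$ timelike hyperbolic catenoids of type I, with the extreme values realized.
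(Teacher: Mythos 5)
Your reduction to counting planar catenaries $(1/a)\cosh(ax+b)$ through two points is the right idea, and it is worth noting that the paper itself does even less: it observes that the profile curve of a type I timelike hyperbolic catenoid is \emph{literally} the Euclidean catenary and simply invokes the classical $0/1/2$ count described in the introduction (Bliss, Nitsche). The genuine problem with your execution is the counting step. ``At most one interior critical point'' does not yield ``at most one solution per branch''. For $x_0>0$ the branch $g_-(a)=\cosh(ax_0)-\frac{\sqrt{a^2-1}}{a}\sinh(ax_0)$ satisfies $g_-(1)=\cosh(x_0)$, has $g_-'(a)\to-\infty$ as $a\to 1^{+}$ (the derivative of $\sqrt{a^2-1}/a$ blows up at $a=1$), and tends to $+\infty$ as $a\to\infty$ (expanding $\sqrt{a^2-1}/a=1-\tfrac{1}{2a^2}+\cdots$ gives $g_-(a)\sim e^{ax_0}/(4a^2)$, not a finite limit as you suggest). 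So $g_-$ dips below $\cosh(x_0)$ and climbs back to infinity, and for $\min g_-<y_0<\cosh(x_0)$ it has \emph{two} solutions. This second crossing is not something to be ``ruled out by a finer study'': it is precisely where the answer $2$ comes from. In the symmetric equal-radius case both catenoids satisfy $b=-\operatorname{arccosh}(a)$, i.e.\ both lie on the $-$ branch, while $g_+$ is strictly increasing from $\cosh(x_0)$ and contributes nothing. The correct bookkeeping is: $g_+$ monotone, hence at most one solution and only when $y_0\geq\cosh(x_0)$; $g_-$ unimodal, hence at most two solutions and two only when $y_0<\cosh(x_0)$; these regimes are mutually exclusive, so the total is $0$, $1$ or $2$. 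Your stated plan would try to prove a per-branch bound of one, which is false.

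A second, smaller error: your justification for restricting to $a>0$ --- that the surface is unchanged under $(a,b)\mapsto(-a,-b)$ --- fails for type I hyperbolic catenoids. That substitution replaces $f(s)=(1/a)\cosh(as+b)$ by $-f(s)$, and the resulting surfaces $S_a$ and $S_{-a}$ are disjoint, lying on opposite sides of the plane $y=0$; the hyperbolic rotation does not identify the two half-spaces the way a Euclidean rotation would. The paper devotes a paragraph to exactly this subtlety: the theorem tacitly assumes $C_1$ and $C_2$ lie on the same side of $\Pi_1=\{y=0\}$, and when they lie on opposite sides the count is $0$ for this separate reason. You arrive at the correct normalization, but you need to state the same-side hypothesis (or handle the opposite-side case explicitly) rather than derive it from a symmetry that does not hold.
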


The statement of this theorem needs to be clarified in the following sense. In Euclidean space, and for $a>0$, $b\in\r$, the two catenoids obtained by rotating about the $z$-axis the curve $\{x(z)=(1/a)\cosh(az+b), y=0\}$ and  the profile curve $\{x(z)=(-1/a)\cosh(az+b), y=0\}$ coincide because the circles of the catenoid are Euclidean circles. However in $\r_1^3$, for timelike hyperbolic catenoids of type I, the corresponding catenoids are different. Exactly, the catenoids
$$S_a=\{(s,\frac{1}{a}\cosh(as+b)\cosh(t),\frac{1}{a}\cosh(as+b)\sinh(t): s,t\in\r\}$$
$$S_{-a}=\{(s,-\frac{1}{a}\cosh(as+b)\cosh(t),-\frac{1}{a}\cosh(as+b)\sinh(t): s,t\in\r\}$$
are separated by the plane $\Pi_1$ of equation $y=0$, with $S_a\subset\{y>0\}$ and $S_{-a}\subset \{y<0\}$. Therefore, in Theorem \ref{t-hyperbolic} we are assuming that the coaxial circles $C_1\cup C_2$ lie in the same side of the plane $\Pi_1$. For example, taking $a=1$, $b=0$, the timelike hyperbolas of type I given by 
$C_1=\{(h,\cosh{t},\sinh{t})\in\r_1^3: t\in \mathbb{R}\}$ and $C_2=\{(-h,-\cosh{t},\sinh{t})\in\r_1^3: t\in \mathbb{R}\}$, $h>0$, are separated a distance $2h>0$. Although  they are  invariant by the group of rotations whose axis is $L=\mbox{sp}\{e_1\}$,   they can not be connected by a hyperbolic catenoid of type I {\it for every value $h$}.  Therefore if we want to state a similar result as in Euclidean space relating the distance between the hyperbolas and the existence of a catenoid connecting them, we have to add the assumption that they lie in the same side of $\Pi_1$. Thus we have:

\begin{corollary}
 Let $C_{\pm h}=\{(\pm h,r\cosh{t},r\sinh{t})\in\r_1^3: t\in \mathbb{R}\}$ be two coaxial timelike hyperbolas of radius $r>0$ and separated $2h>0$ far apart.  Then there is a number $c_1\simeq 1.325$ such that the number of timelike hyperbolic catenoids of type I connecting $C_{-h}\cup C_h$ is $0$, $1$ or $2$ depending if  $h/r>c_1$, $h/r=c_1$ and  $ h/r<c_1$, respectively.
\end{corollary}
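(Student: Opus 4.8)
The plan is to reduce the statement to the classical problem of counting catenaries spanning two coaxial circles in Euclidean space, recalled in the Introduction. By Proposition~\ref{p-revolution} and Theorem~\ref{t-rot}, up to the group of rotations every timelike hyperbolic catenoid of type I is parametrized by $X(s,t)=(s,f(s)\cosh t,f(s)\sinh t)$ with profile $f(s)=\frac{1}{a}\cosh(as+b)$, $a\neq 0$, $b\in\r$. Since $\cosh>0$, the function $f$ has constant sign; and because both hyperbolas $C_{\pm h}$ lie in the half-space $\{y>0\}$ (they meet the $xy$-plane at $(\pm h,r,0)$ with $r>0$), the surface must have $f>0$, hence $a>0$. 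This is exactly the point of the observation on the side of $\Pi_1=\{y=0\}$ made after Theorem~\ref{t-hyperbolic}: it pins down the sign of $a$ and discards the companion family $S_{-a}$. Such a catenoid connects $C_{-h}$ with $C_h$ if and only if $f(\pm h)=r$, i.e.
$$\frac{1}{a}\cosh(ah+b)=r,\qquad \frac{1}{a}\cosh(-ah+b)=r.$$
First I would note that equating the two left-hand sides gives $\cosh(ah+b)=\cosh(-ah+b)$, whence $b=0$ since $\cosh$ is even and $a\neq 0$. Thus the number of type I catenoids spanning $C_{-h}\cup C_h$ equals the number of $a>0$ solving $\frac{1}{a}\cosh(ah)=r$, which is precisely the defining equation of the Euclidean catenary problem.

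Next I would carry out the short one-variable analysis (one may alternatively invoke \cite{bl,is,ni}). Setting $u=ah>0$, the equation reads $g(u):=\frac{\cosh u}{u}=\frac{r}{h}$, so one must count the positive roots of $g(u)=r/h$. The key computation is the shape of $g$: we have $g(0^{+})=g(+\infty)=+\infty$ and $g'(u)=\frac{u\sinh u-\cosh u}{u^{2}}$, whose numerator $\psi(u)=u\sinh u-\cosh u$ satisfies $\psi(0)=-1<0$ and $\psi'(u)=u\cosh u>0$ for $u>0$. Hence $\psi$ is strictly increasing with a single zero $u^{*}$ (equivalently $u^{*}=\coth u^{*}$), so $g$ strictly decreases on $(0,u^{*}]$, strictly increases on $[u^{*},+\infty)$, and attains there its absolute minimum, with $g(u^{*})=\sinh u^{*}$ after using $\cosh u^{*}=u^{*}\sinh u^{*}$. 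Consequently $g(u)=r/h$ has two positive solutions, exactly one, or none, according to the position of $r/h$ relative to this minimum, and each solution yields a distinct catenoid.

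Translating back, and since $g$ and the equation $g(u)=r/h$ are exactly those governing the Euclidean catenary, the three regimes correspond to $h/r<c_1$, $h/r=c_1$ and $h/r>c_1$ for the classical threshold $c_1\simeq 1.325$ of \cite{bl,is,ni}, giving respectively $2$, $1$ or $0$ timelike hyperbolic catenoids of type I connecting $C_{-h}\cup C_h$. I expect the only genuinely delicate part to be the reduction rather than the counting: one must use Theorem~\ref{t-rot} to be sure that no profile other than $\frac{1}{a}\cosh(as+b)$ can occur, use the hypothesis that $C_{\pm h}$ lie on the same side of $\Pi_1$ to fix $a>0$ and eliminate the family $S_{-a}$, and check that the two boundary conditions force the centering $b=0$. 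Once this bookkeeping is done, the trichotomy is immediate from the monotonicity of $g$.
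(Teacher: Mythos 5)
Your proposal is correct in substance and takes exactly the route the paper takes: the paper gives no separate proof of this corollary, only the observation that the type I profile $(1/a)\cosh(as+b)$ is the Euclidean catenary, deferring the counting to the classical result quoted in the introduction. Your reduction (using the same-side-of-$\Pi_1$ hypothesis to fix the sign of $a$, and the two boundary conditions to force $b=0$) plus the explicit analysis of $g(u)=\cosh u/u$ actually supplies more detail than the paper does, and the monotonicity argument via $\psi(u)=u\sinh u-\cosh u$ is sound.

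One point needs reconciling in your last step. Your computation shows that solutions of $\cosh(ah)/a=r$ exist iff $r/h\geq\min g=\sinh u^{*}\approx 1.509$, i.e.\ iff $h/r\leq 1/\sinh u^{*}\approx 0.663$. The classical constant $c_1\simeq 1.325$ is the threshold for $(\text{total separation})/r$, and here the total separation is $2h$, so your trichotomy is governed by $2h/r\lessgtr c_1$, equivalently $h/r\lessgtr c_1/2\approx 0.663$ --- not $h/r\lessgtr 1.325$ as you assert in the final sentence. (The corollary as printed carries the same normalization slip relative to the introduction, where $h$ denotes the full separation; your own analysis in fact exposes it, so state the conclusion in the form your computation supports.)
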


\begin{figure}[hbtp]
\begin{center}\hspace{1.4cm} \includegraphics[width=.4\textwidth]{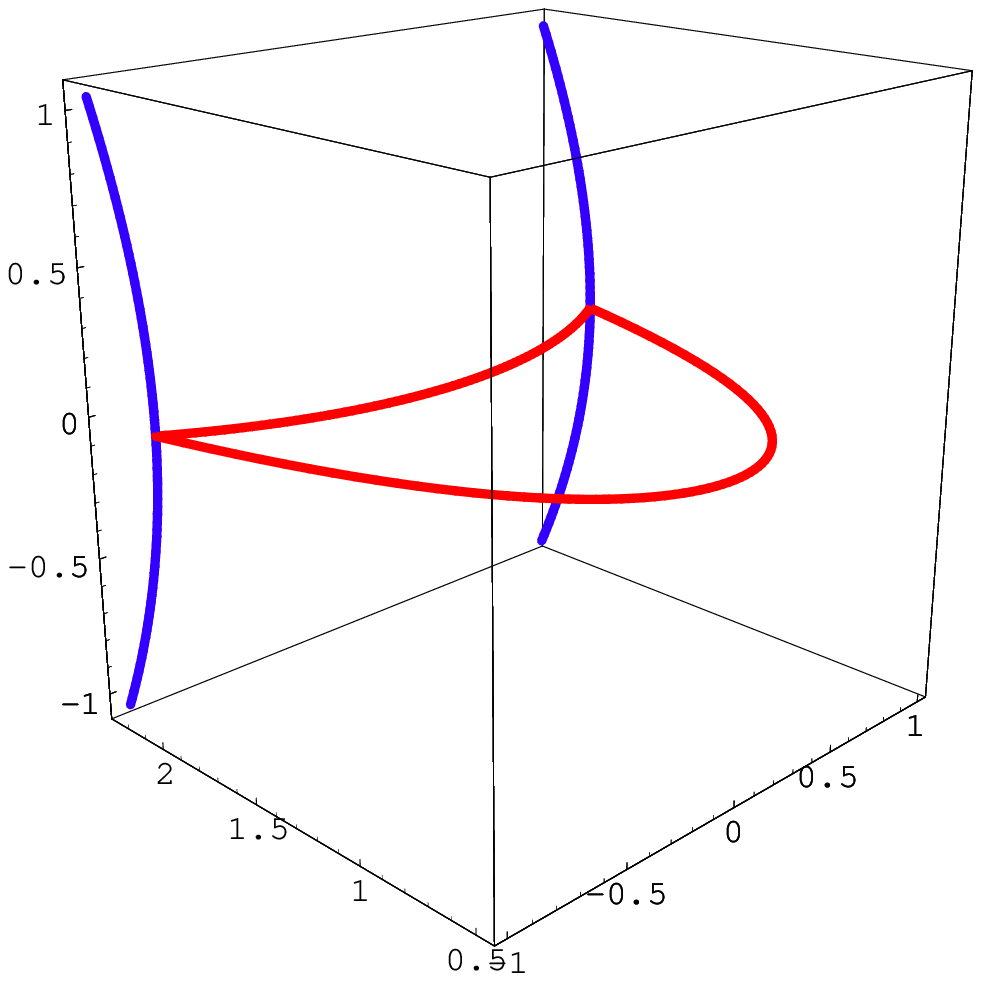}\hspace{1.cm} \includegraphics[width=.4\textwidth]{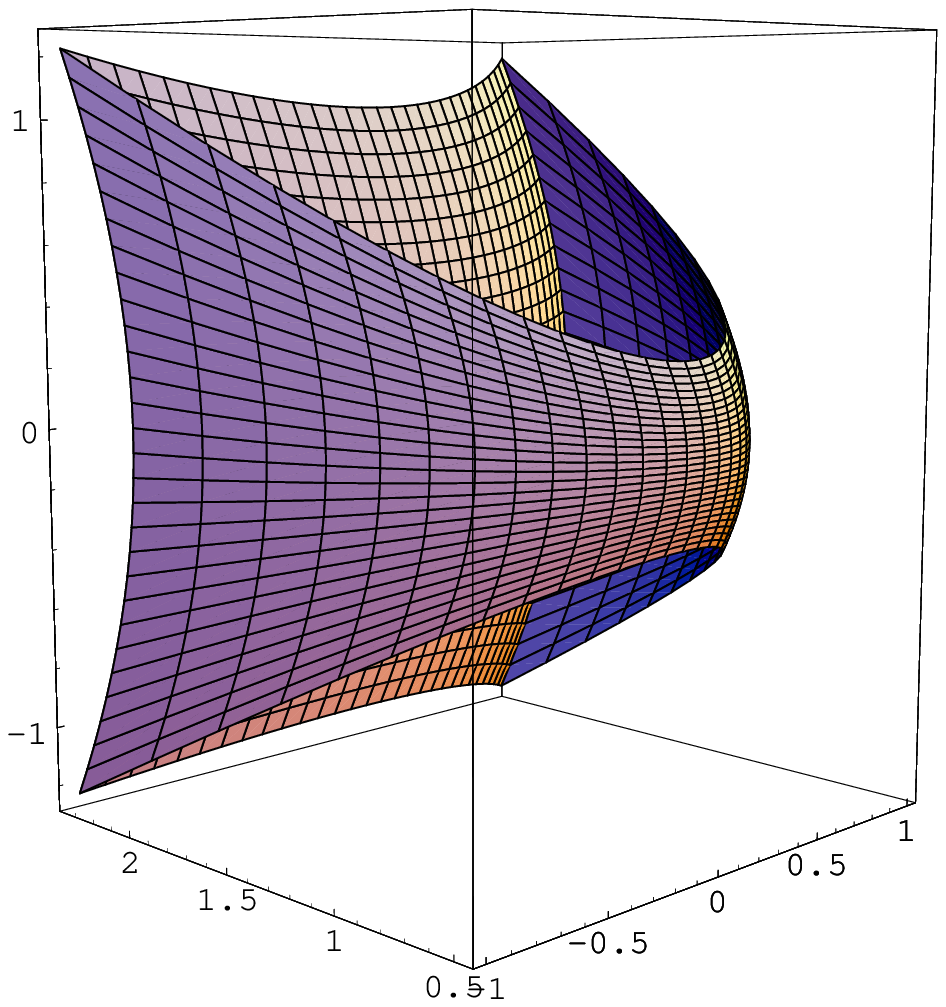}\end{center}
\caption{Two hyperbolic catenoids of type I connecting two coaxial circles. Left: two coaxial hyperbolas of type I of radius $r=2$ and $h=2$ far apart (blue) connecting by two profile curves $f(s)=\cosh(as)/a$ for values $a\simeq 0.589$ and $a\simeq 2.126$ (red). Right: the corresponding two hyperbolic catenoids of type I}\label{thc}
\end{figure}

We now consider two coaxial spacelike hyperbolas of type II. Similarly to the case of Theorem \ref{t-hyperbolic}, there exist spacelike hyperbolas of type II that can not be connected by a timelike hyperbolic catenoid of type II. For example, this occurs with the hyperbolas $C_1=\{(h,\sinh{t},\cosh{t})\in\r_1^3: t\in \mathbb{R}\}$ and $C_2=\{(-h,\sinh{t},\cosh{t})\in\r_1^3: t\in \mathbb{R}\}$ which are in the same side of the plane $\Pi_2$ of  equation $z=0$. For the case of spacelike hyperbolic catenoids of type II, this phenomenon does not occur because the profile curve is given in terms of the sine function. Therefore, in (1) of Theorem \ref{t-hyperbolic2} below, we are assuming that the circles lie in opposite sides of $\Pi_2$. 

\begin{theorem}\label{t-hyperbolic2} Let $C_1\cup C_2$ be two coaxial spacelike hyperbolas of type II. Then:
\begin{enumerate}
\item  The number of timelike hyperbolic catenoids of type II spanning $C_1\cup C_2$ is $0$ or $1$.
 \item If the radius of $C_1$ and $C_2$ coincide, and $h$ is the distance separating $C_1$ and $C_2$, then there exist at least one spacelike hyperbolic catenoid of type II spanning $C_1\cup C_2$ and the number $N(h)$ of these catenoids connecting $C_1$ and $C_2$ increases (going to $\infty$) as $h\rightarrow\infty$.
        \end{enumerate}
\end{theorem}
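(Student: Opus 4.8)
The plan is to reduce both statements to the study of a single one-variable boundary-value problem, exactly as in Section \ref{s-elliptic}, since the profile curves of hyperbolic catenoids of type II are $f(s)=(1/a)\sinh(as+b)$ (timelike) and $f(s)=(1/a)\sin(as+b)$ (spacelike). After an isometry we may place the axis as $L=\mathrm{sp}\{e_3\}$ in the type-II parametrization $X(s,t)=(s,f(s)\sinh t,f(s)\cosh t)$, so that a circle of ``radius'' $r$ lying in the plane $s=s_0$ is recovered from the value $f(s_0)=r>0$. Passing to planar curves in an $(x,y)$-plane, part (1) asks: given a point $P=(x_0,y_0)$ with $y_0>0$ and a point $Q=(0,y_1)$ with $y_1>0$ (the circles lie in opposite sides of $\Pi_2$, which after normalization fixes both ordinates positive and forces the $\sinh$ branch to be used with a genuine sign choice), how many curves $y=(1/a)\sinh(ax+b)$, $a\neq0$, pass through both? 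Imposing $Q$ gives $b=\sinh^{-1}(a y_1)$, and then $P$ yields a single scalar equation $\varphi(a)=y_0$ with
$$\varphi(a)=y_1\cosh(a x_0)+\frac{\sqrt{1+a^2 y_1^2}}{a}\sinh(a x_0).$$
First I would compute the limits $\lim_{a\to 0}\varphi(a)=y_1+x_0$ and the behavior as $a\to\pm\infty$ ($+\infty$ if $x_0>0$, $-\infty$ if $x_0<0$), getting existence by Bolzano on the appropriate region; then I would show $\varphi$ is strictly monotone in $a$ by the same device used for \eqref{el1}–\eqref{el2}, namely writing $\varphi'(a)$ as $1/(a^2\sqrt{1+a^2y_1^2})$ times an auxiliary function $h(x_0)$ with $h(0)=0$ and $h'$ of one sign. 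This gives ``$0$ or $1$'', and the degenerate cases (where $P$ lies in the complementary region) give $0$, completing part (1).

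For part (2), the two circles have the same radius, so after a homothety and a translation along the axis we take them in the planes $s=\pm h$ with $f(\pm h)=1$. Now the relevant catenoids are \emph{spacelike}, with profile $y=(1/a)\sin(ax+b)$, and the boundary conditions become $\sin(-ah+b)=a$ and $\sin(ah+b)=a$ (the equal-radius, symmetric configuration). Subtracting and adding these two, exactly as in the treatment of timelike elliptic catenoids in Section \ref{s-elliptic}, splits into the cases $b=(2k+1)\pi/2$ (giving $y=\pm\cos(ax)/a$ and the equation $\cos(ah)=a$), $b=k\pi$ (giving $\sin(ah)=a$), and the ``resonant'' subcases $ah\in\pi\z$ or $ah\in\pi/2+\pi\z$ which produce a free phase $b$ constrained by $\sin b$ or $\cos b$ equal to a quantity $\to 0$ as $h\to\infty$. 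I would then argue that $\cos(ah)-a$ has a first zero for every $h$ (existence of at least one catenoid, proving the first assertion of (2)), and that the periodicity relation $g_h(a+2k\pi/h)=g_h(a)-2k\pi/h$ together with the formula for $g_h$ at its local maxima forces more and more sign changes as $h\to\infty$; likewise the resonant subcases contribute $\lfloor h/\pi\rfloor$-many and $\lfloor (2h-\pi)/(2\pi)\rfloor$-many solutions. Summing these contributions gives $N(h)\to\infty$.

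The main obstacle I anticipate is \emph{not} the existence or monotonicity arguments—those are routine sign analyses of $\cosh/\sinh$-combinations identical in spirit to \eqref{hhh}–\eqref{el2}—but rather the bookkeeping needed to make the counting in part (2) honest: one must verify that distinct pairs $(a,b)$ coming from the different cases and subcases give genuinely distinct (non-congruent) catenoids, and that within each resonant family the many admissible $b$'s are not identified by the symmetry $t\mapsto -t$ of the rotation group or by $a\mapsto -a$. Handling this requires a short lemma describing when two profile curves $(1/a_1)\sin(a_1x+b_1)$ and $(1/a_2)\sin(a_2x+b_2)$ generate the same type-II surface; once that normalization is fixed, the lower bound $N(h)\ge$ (number of zeros in the first, non-resonant case) already tends to infinity by \eqref{fhm}, so the ``$\to\infty$'' claim follows even without the resonant contributions, which is the cleanest way to finish.
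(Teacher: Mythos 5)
Your proposal is correct and follows essentially the same route as the paper, which proves this theorem simply by observing that the type II profile curves $(1/a)\sinh(as+b)$ and $(1/a)\sin(as+b)$ are exactly those already analysed for spacelike and timelike elliptic catenoids in Section \ref{s-elliptic}, so parts (1) and (2) reduce to the arguments around \eqref{se1}--\eqref{el2} and \eqref{pe}--\eqref{fhm} respectively. One small correction: since in part (1) the circles lie on opposite sides of $\Pi_2$, the two ordinates in your planar reduction have opposite signs rather than both being positive, which just routes you to the $P\in R_2$ subcase of the $\sinh$ analysis; your added care about identifying non-congruent catenoids in the count for part (2) is a reasonable refinement that the paper does not spell out.
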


As a consequence of the argument in Corollary \ref{c1}, we obtain:

\begin{corollary}
  Let $C^{\pm}_{h}=\{(\pm h,r\sinh{t},\pm r\cosh{t})\in\r_1^3: t\in \mathbb{R}\}$   be two coaxial spacelike hyperbolas of radius $r>0$ and separated $2h>0$ far apart.  Then the number of timelike hyperbolic catenoids of type II connecting $C^{+}_{h}$ and $C^{-}_{h}$  is $0$ if $r\leq h$ and it is $1$ if $r>h$.
\end{corollary}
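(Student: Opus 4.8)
The plan is to mimic the proof of Corollary~\ref{c1}, reducing the question to a single scalar equation in the parameter $a$ and then to a monotonicity argument. First I would pass to the associated planar problem. By Proposition~\ref{p-revolution} and Theorem~\ref{t-rot}, a timelike hyperbolic catenoid of type II is parametrized by $X(s,t)=(s,f(s)\sinh t,f(s)\cosh t)$ with $f(s)=(1/a)\sinh(as+b)$, $a\neq 0$, $b\in\r$. Matching the circles $X(\pm h,\cdot)$ to the prescribed hyperbolas, and keeping track of the signs of the $z$-coordinates (the branch $X(h,\cdot)$ must lie in $\{z>0\}$ to meet $C^{+}_h$, whereas $X(-h,\cdot)$ must lie in $\{z<0\}$ to meet $C^{-}_h$, which is consistent with the two hyperbolas lying on opposite sides of $\Pi_2$), one is forced to have $f(h)=r$ and $f(-h)=-r$. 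Since the substitution $a\mapsto-a$, $b\mapsto-b$ leaves both the family and the generated surface unchanged, we may assume $a>0$.

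Next I would use the two boundary conditions to eliminate $b$. They read $\sinh(ah+b)=ar$ and $\sinh(-ah+b)=-ar$; adding them and using the identity $\sinh(ah+b)+\sinh(-ah+b)=2\sinh(b)\cosh(ah)$ gives $\sinh(b)\cosh(ah)=0$, hence $b=0$ because $\cosh(ah)>0$. Therefore a timelike hyperbolic catenoid of type II spanning $C^{-}_h\cup C^{+}_h$ exists if and only if the equation $(1/a)\sinh(ah)=r$ admits a solution $a>0$, and distinct solutions yield distinct catenoids; but this is precisely equation~\eqref{ee1} from the proof of Corollary~\ref{c1}.

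Finally I would invoke that proof directly: the function $g(a)=\sinh(ah)/a$ satisfies $\lim_{a\to0^{+}}g(a)=h$ and $\lim_{a\to\infty}g(a)=\infty$, while $g'(a)=\bigl(ah\cosh(ah)-\sinh(ah)\bigr)/a^2>0$ for $a>0$, so $g$ is a strictly increasing bijection of $(0,\infty)$ onto $(h,\infty)$. Hence $g(a)=r$ has exactly one solution when $r>h$ and none when $r\le h$, which is the asserted count. I do not expect a genuine obstacle: every step is a short computation or a citation of Corollary~\ref{c1}. The one point that requires care is the bookkeeping of orientations when identifying the parametrized circles $X(\pm h,\cdot)$ with $C^{\pm}_h$, so as to land on the solvable equation~\eqref{ee1}; for hyperbolas lying on the same side of $\Pi_2$ the analogous computation instead forces $\cosh(b)\sinh(ah)=0$, hence $a=0$, which is why no catenoid exists in that configuration.
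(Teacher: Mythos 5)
Your proposal is correct and follows essentially the same route as the paper, which proves this corollary simply by invoking "the argument in Corollary \ref{c1}": reduce to the scalar equation $\sinh(ah)/a=r$ and use that $g(a)=\sinh(ah)/a$ increases strictly from $h$ to $\infty$. Your explicit derivation that the boundary conditions force $b=0$ (and your side remark explaining why same-side hyperbolas admit no such catenoid) is a slightly more careful bookkeeping of a step the paper leaves implicit, but it is not a different method.
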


\section{Parabolic catenoids spanning two coaxial circles}

Consider a parabolic catenoid in $\r_1^3$. By Theorem \ref{t-rot}, we know that the profile curve of a spacelike surface is  $f(s)=as^3+b$ ($a>0$), and if it is timelike, then  $f(s)=-as^3+b$ ($a>0$). The surface has a singularity when it meets with the rotation axis, that is, when $s=0$.

\begin{theorem}\label{t-parabolic}
Given two coaxial  parabola circles, there is $0$ or $1$ (spacelike or timelike)  parabolic catenoid connecting both circles.
\end{theorem}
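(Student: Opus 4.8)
The plan is to translate the question into an exact cubic interpolation through two prescribed points, after which the count becomes transparent and no analytic estimate is needed. By Proposition \ref{p-revolution}(3) a parabolic rotational surface is $X(s,t)=A(t)\cdot(f(s)+s,0,f(s)-s)$, so the circle at parameter value $s$ is the parabola whose vertex, i.e. the unique point of that circle lying on the $xz$-plane, equals $(f(s)+s,0,f(s)-s)$, and this circle lies in the plane of equation $x-z=2s$. Conversely, by the classification of circles in Section \ref{sec2}, any parabola circle coaxial with $L=\mbox{sp}\{e_1+e_3\}$ is uniquely determined by its vertex $(u,0,v)$ subject only to $u\neq v$. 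Hence, up to an isometry of $\r_1^3$, two coaxial parabola circles $C_1,C_2$ lying in different planes are completely encoded by two pairs $(s_i,\phi_i):=\left(\tfrac{u_i-v_i}{2},\tfrac{u_i+v_i}{2}\right)$, $i=1,2$, with $s_i\neq 0$ (so that $C_i$ is a genuine circle rather than a line) and $s_1\neq s_2$ (so that the planes are distinct). Under this dictionary, a parabolic catenoid spanning $C_1\cup C_2$ is the same datum as a choice of causal type --- spacelike, which by Theorem \ref{t-rot} forces a profile $f(s)=as^3+b$, or timelike, which forces $f(s)=-as^3+b$ --- together with $a>0$ and $b\in\r$ satisfying $f(s_1)=\phi_1$ and $f(s_2)=\phi_2$.

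Next I would solve the resulting system. Writing $\varepsilon=+1$ in the spacelike case and $\varepsilon=-1$ in the timelike case, in both cases $f(s)=\varepsilon\,a\,s^3+b$, and the two boundary conditions become the linear system $\varepsilon\,a\,s_i^3+b=\phi_i$, $i=1,2$. Subtracting the two equations gives $\varepsilon\,a\,(s_1^3-s_2^3)=\phi_1-\phi_2$; since the map $x\mapsto x^3$ is injective and $s_1\neq s_2$, we have $s_1^3-s_2^3\neq 0$, so $\varepsilon\,a=(\phi_1-\phi_2)/(s_1^3-s_2^3)$ is forced and then $b$ is forced as well. Consequently at most one triple $(\varepsilon,a,b)$ is compatible with the boundary data, so there is at most one parabolic catenoid, regardless of causal type. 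For existence one only has to check whether the admissibility condition $a>0$ holds: since $s_1^3-s_2^3=(s_1-s_2)(s_1^2+s_1s_2+s_2^2)$ with the quadratic factor strictly positive (the $s_i$ are not both zero), the sign of $\varepsilon a$ equals the sign of $(\phi_1-\phi_2)(s_1-s_2)$. If this product is positive there is exactly one parabolic catenoid, and it is spacelike; if it is negative there is exactly one, and it is timelike; and if $\phi_1=\phi_2$ --- equivalently $u_1+v_1=u_2+v_2$, a perfectly admissible configuration of two distinct coaxial parabola circles --- then $\varepsilon a=0$ is forced, which is not allowed, so there is none. In all cases the number is $0$ or $1$, which is the claim of Theorem \ref{t-parabolic}.

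I do not expect a genuine obstacle here: unlike the elliptic and hyperbolic cases, the admissible profiles are honest cubics, so the interpolation is exact and affine in $(a,b)$ and nothing transcendental has to be estimated, and the existence part is settled purely by a sign. The only place that requires care is the geometric bookkeeping of the first step --- correctly identifying a parabola circle with its vertex and translating ``lying in different planes'' into $s_1\neq s_2$. One may also wish to remark, as the text already notes, that the profile has a cone point at $s=0$; when $s_1$ and $s_2$ have opposite signs the annulus joining $C_1$ and $C_2$ contains this single singular point, but this does not affect the count $0$ or $1$.
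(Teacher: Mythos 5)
Your proof is correct and follows essentially the same route as the paper: both reduce the problem to finding the cubic profile $f(s)=\pm as^3+b$ through two prescribed points of the vertex curve, which is an affine condition determining $a$ (and then $b$) uniquely, with existence governed by a sign. The only cosmetic difference is that you keep both points general and treat the two causal types uniformly via $\varepsilon=\pm1$, whereas the paper normalizes one point to $(1,0)$ and runs a (slightly redundant) monotonicity-and-continuity argument on the linear function $g(a)=a(x_0^3-1)$ separately in each case.
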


\begin{proof}
\begin{enumerate}
\item Spacelike case. The generating curve is $f(s)=as^3+b$ with $a>0$. The formulation of the problem for planar curves is as follows. Given two points $P$ and $Q$ of the $xy$-plane, find how many curves of type $f(x)=ax^3+b$, $a>0$, pass through $P$ and $Q$. The rotation axis of the surface corresponds with the $y$-axis.  After a vertical translation and a homothethy, we suppose that  $P=(1,0)$. Then $f(x)=a(x^3-1)$. Let $Q=(x_0,y_0)$ be another point of the $xy$-plane. The problem reduces to find how many curves of the family ${\mathcal F}=\{f(x)=a(x^3-1):a>0\}$ go through the point $Q$. By the graphics of the elements of ${\mathcal F}$,  a first necessary condition is that  $Q$ must belong to the region $R=R_1\cup R_2$, where
    $$R_1=\{(x,y)\in\r^2: x-1>0,y>0\},\ \  R_2=\{(x,y)\in\r^2: x-1<0,y<0\}.$$
In particular, if $Q\not\in R$, there is not a spacelike parabolic catenoid connecting both circles. This proves the part of Theorem \ref{t-parabolic} that asserts that the number of catenoids connecting two circles is $0$. Suppose now that  $Q\in R_1$. We will find a value $a$ such that $a(x_0^3-1)=y_0$ and next, study how many values $a$ satisfy this equation. The function $g(a)=a(x_0^3-1)$ satisfies $\lim_{a\rightarrow 0}g(a)=0$ and $\lim_{a\rightarrow \infty}g(a)=\infty$. Since $0<y_0$, a continuity argument proves that there exists a value $a$ such that $g(a)=y_0$. On the other hand, the derivative $g'(a)=x_0^3-1$ is positive, proving that $g$ is strictly increasing, so the uniqueness of the curve among all ones of the family ${\mathcal F}$.  It follows the existence of a unique catenoid connecting the corresponding two circles. The argument when $Q\in R_2$ is similar. This finishes the proof of Theorem \ref{t-parabolic} for a spacelike surface.

\item Timelike case. The generating curve is $f(s)=as^3+b$ with $a<0$. Again, we suppose that $P=(1,0)$ so   $f(x)=a(x^3-1)$. Let $Q=(x_0,y_0)$ be another point of the $xy$-plane. The same argument as above proves that if $Q$ belongs to the region   $T=T_1\cup T_2$, where
    $$T_1=\{(x,y)\in\r^2: x-1>0,y<0\},\ \  T_2=\{(x,y)\in\r^2: x-1<0,y>0\},$$
there exists a unique curve going through $Q$ and if $Q\not\in T$, then there is not a such curve.
\end{enumerate}
\end{proof}

\textbf{Acknowledgement.} The first author has been supported by the Research Fellowships of the Japan Society for the Promotion of Science (JSPS). The second author has been partially supported by  the MINECO/FEDER grant MTM2014-52368-P.

\bibliographystyle{elsarticle-num}

\begin{thebibliography}{00}
\bibitem{bo} Barbosa, J. L. M. Oliker, V.: Stable spacelike hypersurfaces with constant mean
curvature in Lorentz spaces, Geometry and global analysis (Sendai, 1993), 161--164,
Tohoku Univ., Sendai, 1993.

\bibitem{bl} Bliss, G.: Lectures on the Calculus of Variations. U. of Chicago Press, 1946.

\bibitem{is} Isenberg, C.:   The Science of Soap Films and Soap Bubbles, Dover Publications, Inc. 1992.
\bibitem{ko}  Kobayashi, O.: Maximal surfaces in the 3-dimensional Minkowski space $L^3$. Tokyo J. Math.
6 (1983), 297--303.
\bibitem{lo}   L\'opez, R.:  Timelike surfaces with constant mean curvature in Lorentz three-space, Tohoku Math. J.   52 (2000),  515--532.
\bibitem{ni} Nitsche, J.C.C.: Lectures on Minimal Surfaces. Cambridge University Press, Cambridge, 1989.

\bibitem{on} O'Neill, B.:  Semi-Riemannian Geometry. With applications to relativity. Pure and Applied Mathematics, 103. Academic Press, Inc., New York, 1983.

\bibitem{we} Weinstein, T.: An Introduction to Lorentz Surfaces, de Gruyter Exposition in Math. 22, Walter de Gruyter, Berlin, 1996.
\end{thebibliography}

 \end{document}